\definecolor{darkred}{RGB}{80,0,0}
\definecolor{darkgreen}{RGB}{0,100,0}
\definecolor{darkblue}{RGB}{0,0,100}
\newcommand{\supp}{\operatorname{Supp}}
\newcommand{\eq}{\begin{equation}}
\newcommand{\qe}{\end{equation}}
\newcommand{\vect}{\operatorname{vec}}
\newcommand{\id}{\operatorname{I}}
\newcommand{\Com}{\!\operatorname{Com}}
\newcommand{\R}{\mathbb{R}}
\newcommand{\W}{\mathsf{W}}
\newcommand{\K}{\mathsf{K}}
\newcommand{\A}{\mathsf{A}}
\tikzstyle{vertex}=[circle, draw, inner sep=0pt, minimum size=6pt]
\newcommand{\vertex}{\node[vertex]}
\newcommand{\Crit}{\operatorname{Crit}}
\newcommand{\independent}{\rotatebox[origin=c]{90}{$\models$}}
\begin{document}

\title{Reconstructing Undirected Graphs from Eigenspaces}

\title{Reconstructing Undirected Graphs from Eigenspaces}

\author{\name Yohann De Castro \email yohann.decastro@math.u-psud.fr \\
       \addr Laboratoire de Math\'ematiques d'Orsay, 
        Univ. Paris-Sud, CNRS, Universit\'e Paris-Saclay,\\
         F-91405 Orsay, France
       \AND
       \name Thibault Espinasse \email espinasse@math.univ-lyon1.fr\\
             \addr Institut Camille Jordan (CNRS UMR 5208), Universit\'e Claude Bernard Lyon 1,\\
         F-69622 Villeurbanne, France
       \AND
       \name Paul Rochet \email paul.rochet@univ-nantes.fr \\
             \addr Laboratoire de Math\'ematiques Jean Leray (CNRS UMR 6629), 
         Universit\'e de Nantes, \\
         F-44322 Nantes, France
     }

\editor{ArXiv:1603.08113v3}

\maketitle

\begin{abstract}
In this paper, we aim at recovering an undirected weighted graph of $N$ vertices from the knowledge of a perturbed version of the eigenspaces of its adjacency matrix $\W$. For instance, this situation arises for stationary signals on graphs or for Markov chains observed at random times. Our approach is based on minimizing a cost function given by the Frobenius norm of the commutator $\mathsf{A} \mathsf{B}-\mathsf{B} \mathsf{A}$ between symmetric matrices $\mathsf{A}$ and $\mathsf{B}$. 

In the Erd\H{o}s-R\'enyi model with no self-loops, we show that identifiability ({\it i.e.},~the ability to reconstruct $\W$ from the knowledge of its eigenspaces) follows a sharp phase transition on the expected number of edges with threshold function $N\log N/2$.

Given an estimation of the eigenspaces based on a $n$-sample, we provide support selection procedures from theoretical and practical point of views. In particular, when deleting an edge from the active support, our study unveils that our test statistic is the order of~$\mathcal O(1/n)$ when we overestimate the true support and lower bounded by a positive constant when the estimated support is smaller than the true support. This feature leads to a powerful practical support estimation procedure. Simulated and real life numerical experiments assert our new methodology.
\end{abstract}

\begin{keywords}
Support recovery; Identifiability; Stationary signal processing; Graphs; Backward selection algorithm
\end{keywords}

\section{Presentation}
{Networks have become a natural and popular way to model interactions in applications such as information technology \citep{rossi2013activity}, social life \citep{jiang2013understanding,matias2015semiparametric}, genetics \citep{giraud2012graph}, ecology \citep{thomas2015chapter,miele2017revealing}.} In this paper, we investigate the reconstruction of an undirected weighted graph of size~$N$ from incomplete information on its set of edges (for instance, one knows that the target graph has no self-loops) and an estimation of the eigenspaces of its adjacency matrix~$\W$. This situation depicts any model where one knows in advance a linear operator~$\mathsf{K}$ that commutes with~$\W$. 

For instance, several authors \citep{espinasse2014parametric,girault2015stationary,perraudin2016stationary,marques2016stationary} have proposed a definition of stationarity for signal processing of graphs. In the Gaussian framework, they have shown that this definition implies that the covariance operator $\K$ is jointly diagonalizable with the Laplacian $\W$ \citep{perraudin2016stationary} or some weighted symmetric adjacency matrix supported on the graph \citep{espinasse2014parametric,marques2016stationary}. 

{Another framework adapted to our methodology concerns time-varying Markov processes, which are used to model numerous phenomena such as~chemical reactions \citep{anderson2011continuous} or waiting lines in queuing theory \citep{gaver1959imbedded}, see also \cite{MR668189,MR0468086,fytp14}.}  In some cases, one may observe at random times a Markov chain with transition matrix $\mathsf P$. The transition matrix $\mathsf Q$ of the resulting Markov chain can be shown to be a function of $\mathsf P$. Thus, the transitions on the original process can be recovered from an estimation of $\mathsf Q$ given that $\mathsf P$ and $\mathsf Q$ commute. Several models are presented in Section \ref{a:model} while the general model is given in Section~\ref{sec:Model}. 

Section \ref{sec:Identifiability} is concerned with identifiability issues, i.e.~the capacity to solve such problem. We exhibit sufficient and necessary conditions on the ability to reconstruct an undirected graph with no self-loops from the knowledge of the eigenspaces of $\W$. These conditions allow us to derive a sharp phase transition on identifiability in the Erd\H{o}s-R\'enyi model. 

Then, we introduce and theoretically assert new estimation schemes based on the Frobenius norm of the commutator $\mathsf{A} \mathsf{B}-\mathsf{B} \mathsf{A}$ between symmetric matrices $\mathsf{A}$ and $\mathsf{B}$, see Section~\ref{sec:commutator}. More precisely, we assume that we have access to an estimation $\widehat{\mathsf{K}}$ of ${\mathsf{K}}$ build from a $n$-sample and we consider the empirical contrast given by the commutator, namely $ \mathsf{A} \mapsto \Vert \widehat{\mathsf{K}} \mathsf{A} - \mathsf{A} \widehat{\mathsf{K}} \Vert $ where~$\Vert \cdot \Vert$ denotes the Frobenius norm. Using backward-type procedures based on this empirical contrast, Section \ref{sec:Estimation} derives estimators of the graph structure, {\it i.e.}, its set of edges~$S^\star$\textemdash referred to as the support. This studies reveals typical behaviors of the empirical contrast when the estimated support~$S$\textemdash referred to as the active set\textemdash contains or not the true support $S^\star$. Numerical experiments developed in {Section} \ref{sec:num} (simulated data) and Section \ref{sec:reallife} (real life data) assess the performances of our new estimation method. Discussion and related questions are presented in Section \ref{sec:discussion}.

To the best of our knowledge, the framework of this paper is new and the present results solve the identifiability issues and enforce an efficient backward-type estimation procedure. Related topics encompass spectral, least-squares and moment methods for graph reconstruction \citep{verzelen2015community,guedon2015community,klopp2015oracle,bubeck2016testing}, Graphical Models \citep{verzelen2008Gaussian,giraud2012graph}, or Vectorial AutoRegressive process \citep{hyvarinen2010estimation} to name but a few. In the specific cases of Ornstein-Uhlenbeck processes and non-linear diffusions, the interesting papers \cite{pereira2010learning} and  \cite{periera2014support} tackle a related subproblem that is to estimate~$\W$ along a trajectory, see Section~\ref{sec:SpatialAR} for further details. Note that the framework of the present paper addresses  processes observed at i.i.d.\! random times\textemdash with possibly unknown distribution\textemdash which are not covered by \cite{pereira2010learning} and  \cite{periera2014support}.

\section{Model and Identifiability}
\label{sec:Motivation}

\subsection{The Model}
\label{sec:Model}
Consider a symmetric matrix $\W\in\R^{N\times N}$ with some zero entries, where nonzero entries describe the intensity of a link of any form of local interaction. One may understand $\W$ as the adjacency matrix of an undirected weighted graph with $N$ vertices. We focus on the eigenspaces of~$\W$ examining models where we have no information on the spectrum of the graph. Depicting this situation, we assume that the information on the target $\W$ stems from an unknown transformation $\mathsf{K} =f(\W) \in\R^{N\times N}$ or, in more realistic scenarios, from a perturbed version $\widehat{\mathsf{K}}$ of $\mathsf{K}$. Here, $f$ is assumed to be an injective analytical function on the real line so that the transformation $\K = f(\W)$ may be understood as an operation on the spectrum of $\W$ only, stabilizing the eigenspaces. Therefore, $\W$ and $\K$ share the same eigenspaces and in particular, they commute, {\it i.e.}, $\W \K = \K \W$.\\

Our goal is to uncover $\W$ from the knowledge of an estimator $\widehat{\mathsf{K}}$ of $\mathsf{K}$, namely reconstruct $\W$ from a perturbed observation of its eigenspaces. The key point is then to use extra information given by the location of some zero entries of $\W$. Hence, we assume that one knows in advance a set $F\subset [1,N]^2$ of ``forbidden'' entries such that 
\eq
\tag{$\mathrm{\mathbf{H}_F}$}
\label{Hyp:Forbid}
\forall(i,j)\in F,\quad\W_{ij}=0\,
\qe
Equivalently, the set $F$ is disjoint to the set of edges of the target graph. Throughout this paper, a special case of interest is given by $F=F_{\mathrm{diag}}:=\{(i,i)\ :\ 1\leq i\leq N\}$ conveying that there are no self-loops in $\W$. 

\subsection{Identifiability}
\label{sec:Identifiability}
For $S \subseteq [1,N ]^2$, denote by $\mathcal E(S)$ the set of symmetric matrices $\mathsf{A}$ whose support is included in $S$, {which we write} $\supp(\mathsf{A}) \subseteq S$. Given the set $F$ of forbidden entries defined via \eqref{Hyp:Forbid}, the matrix of interest $\W$ is sought in the set $\mathcal E(\overline F)$ with $\overline F$ the complement of $F$. In some cases, typically for $F$ sufficiently large, most matrices $\mathsf{W} \in \mathcal E(\overline F)$ are uniquely determined by their eigenspaces. For those $\mathsf{W}\in\mathcal E(\overline F)$, there is no matrix $\mathsf{A}\in \mathcal E(\overline F)$ non collinear with~$\mathsf{W}$ that commutes with $\mathsf{W}$. This property is encapsulated by the notion of $F$-{\it identifiability} as follows.


\begin{definition}[$F$-identifiability]
\label{def:Fidentifiability}
We say that a {symmetric} matrix $\W$ is $F$-{\it identifiable} if, and only if, the only solutions $\mathsf{A}$ with $\supp(\mathsf{A}) \subseteq \overline F$ to $ \mathsf{A} \W = \W \mathsf{A}$ are of the form $\mathsf{A} =\lambda \mathsf{W}  $ for some~$\lambda \in \mathbb R$. Equivalently,
\eq
\label{eq:FindentifiabilityDefinition}
\big\{\mathsf A\in\mathbb R^{N\times N}\ :\ \mathsf A=\mathsf A^\top\!,\  \mathsf{A} \W = \W \mathsf{A}\ 
\mathrm{and}\ \supp(\mathsf{A}) \subseteq \overline F\big\}=\big\{\lambda\W\ :\ \lambda\in\mathbb R\big\} 
\qe
\end{definition}

\noindent
A matrix $\W$ is identifiable if the set of symmetric matrices with the same eigenvectors as~$\W$ and whose support is included in $\overline F$ is the line spanned by $\W$. 

\begin{remark} The dimension of the commutant, defined by
\[
\Com(\mathsf W) : = \Big\{ \mathsf A\in\mathbb R^{N\times N}\ :\ \mathsf A=\mathsf A^\top\!,\   \mathsf A \mathsf W = \mathsf W \mathsf A \Big\}\,,
\] 
is entirely determined by the multiplicity of the eigenvalues of $\mathsf W$. Indeed, letting $\lambda_1,\ldots,\lambda_s$ denote the different eigenvalues of $\mathsf W$ and $\ell_1,\ldots,\ell_s$ their multiplicities, one can show that 
\[
N\leq\dim \big(\Com(\mathsf W) \big) =  \sum_{j=1}^s \frac{\ell_j(\ell_j+1)}2\leq\frac{N(N+1)}2\,.
\]
Now, the $F$-identifiability of $\mathsf W$ can be stated equivalently as $\dim \big( \Com(\mathsf W) \cap \mathcal E(\overline F) \big) = 1$, observing that the left hand side of \eqref{eq:FindentifiabilityDefinition} is exactly $ \Com(\mathsf W) \cap \mathcal E(\overline F) $. Using a simple inclusion/exclusion formula, one can check that the condition
$$
|F|
\geq\dim \big(\Com(\mathsf W) \big) -1
$$ 
is necessary for the $F$-identifiability, where $|F|$ denotes the cardinality of $F$. In particular, a matrix $\mathsf W$ with repeated eigenvalues requires a large set $F$ of forbidden entries in order to be $F$-identifiable. 
\end{remark}

\noindent We have the following proposition.

\begin{proposition}[Lemma 2.1 in \cite{fytp14}]
\label{p:id}
Let $S\subseteq \overline F$, the set of $F$-identifiable matrices in $\mathcal E(S)$ is either empty or a dense open subset of $\mathcal E(S)$.
\end{proposition}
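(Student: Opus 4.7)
The plan is to reformulate $F$-identifiability of $\W \in \mathcal E(S)$ as a maximal-rank condition on the linear commutator map
\[
\Phi_{\W} : \mathcal E(\overline F) \longrightarrow \mathbb R^{N\times N}, \qquad \mathsf A \longmapsto \mathsf A \W - \W \mathsf A,
\]
and then to exploit the fact that this rank is governed by polynomials in the entries of $\W$. Since $S \subseteq \overline F$, every $\W \in \mathcal E(S)$ lies in $\mathcal E(\overline F)$ and belongs to $\ker \Phi_{\W}$, so $\mathrm{rank}(\Phi_{\W}) \leq d - 1$ whenever $\W \neq 0$, where $d := \dim \mathcal E(\overline F)$. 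Translating Definition \ref{def:Fidentifiability}, the matrix $\W$ is $F$-identifiable if and only if $\ker \Phi_{\W} = \mathbb R \W$, which (for $\W \neq 0$) is equivalent to $\mathrm{rank}(\Phi_{\W}) = d - 1$.

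Next, after fixing bases of $\mathcal E(\overline F)$ and of $\mathbb R^{N\times N}$, I would write $\Phi_{\W}$ as a matrix $M(\W)$ whose entries are linear forms in the coordinates of $\W$. The condition $\mathrm{rank}(M(\W)) \geq d - 1$ is the non-vanishing of at least one of the finitely many $(d-1)\times(d-1)$ minors of $M(\W)$, each of which is a polynomial in the entries of $\W$. Hence the set $\mathcal I(S)$ of $F$-identifiable matrices in $\mathcal E(S)$ is the complement, inside the affine space $\mathcal E(S)$, of the common zero locus of finitely many polynomials, and is therefore open.

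For density, assume $\mathcal I(S)$ is nonempty and fix $\W_0 \in \mathcal I(S)$ together with a specific $(d-1)$-minor $P$, viewed as a polynomial on $\mathcal E(S)$, with $P(\W_0) \neq 0$. For an arbitrary target $\W' \in \mathcal E(S)$, the affine segment $\W_t := (1-t)\W_0 + t\W'$ makes $t \mapsto P(\W_t)$ a univariate polynomial that is nonzero at $t = 0$; it therefore has only finitely many roots, and selecting $t_n \to 1$ outside this finite set produces a sequence $\W_{t_n} \in \mathcal I(S)$ with $\W_{t_n} \to \W'$, which proves density. The only genuinely delicate step is the first one\textemdash translating the definition into the rank identity $\mathrm{rank}(\Phi_{\W}) = d - 1$\textemdash after which the argument reduces to the standard fact that a nonzero polynomial on an affine space has empty interior zero set.
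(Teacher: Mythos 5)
Your argument is correct and follows essentially the approach the paper alludes to: the paper does not prove this proposition itself but defers to Lemma 2.1 of \cite{fytp14}, remarking only that the non-$F$-identifiable matrices in $\mathcal E(S)$ form the zero set of a particular analytic function. Your reformulation of identifiability as the rank condition $\mathrm{rank}(\Phi_{\W})=d-1$ and your identification of that analytic function with the $(d-1)\times(d-1)$ minors of the linearly parametrized commutator matrix $M(\W)$ give a valid, self-contained instantiation of exactly that strategy.
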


\noindent 
This proposition conveys that the $F$-identifiability of a matrix $\mathsf{W}$ is essentially a condition on its support $S$. The proof uses the fact that non $F$-identifiable matrices in $\mathcal E(S)$ can be expressed as the zeroes of a particular analytic function, we refer to \cite{fytp14} for further details. By abuse of notation, we say that a support $S\subseteq \overline F$ is $F$-{\it identifiable} if almost every matrix in~$\mathcal E(S)$ are $F$-identifiable. \\

Characterizing the $F$-identifiability appears to be a challenging issue since it can be viewed as understanding the eigen-structure of {graphs through their common support}. The particular case of the diagonal $F_{\mathrm{diag}}$ as the set of forbidden entries is given a particular attention in this paper. The $F_{\mathrm{diag}}$-identifiability, or diagonal identifiability, can be reasonably assumed in many practical situations since it entails that $\mathsf{W}$ lives on a simple graph, with no self-loops. In Theorem~\ref{t:iden} (see Appendix~\ref{sec:CNS}), we introduce necessary and sufficient conditions on the target support $\supp(\W)$ for diagonal identifiability. Defining the {\it kite} graph $\nabla_N$ of size $N \geq 3$ as the graph $(V,E)$ with vertices $V=[1,N]$ and edges $E=\{(k,k+1),\ 1\leq k\leq N-1\}\cup\{(N-2,N)\}$ (see Figure \ref{fig:kite}), one simple sufficient condition on diagonal identifiability reads as follows, a proof in given in Section~\ref{sec:kites_suffice}.
\begin{proposition}
\label{prop:KiteGeneral}
{If the graph $G=([1,N],S)$ contains the kite graph $\nabla_N$ as a subgraph, then $S$ is diagonally identifiable.}
\end{proposition}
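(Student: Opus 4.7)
My plan is to apply Proposition~\ref{p:id} to reduce the problem to exhibiting a single diagonally identifiable matrix, and then to produce one by chasing zeros from a leaf vertex of $\nabla_N$. Since $\mathcal E(\nabla_N) \subseteq \mathcal E(S)$ and $F$-identifiability depends on the matrix alone (not on the ambient support), it is enough to find a diagonally identifiable $\W^\star$ supported on $\nabla_N$. I will take $\W^\star$ to be a weighted adjacency matrix of $\nabla_N$ with generic nonzero edge weights, let $\A$ be an arbitrary symmetric hollow matrix with $\A\W^\star = \W^\star\A$, and show that $\A$ must be a scalar multiple of $\W^\star$. The base case $N=3$ is the triangle and can be dispatched by a direct computation: three bilinear equations on the off-diagonal entries of $\A$ immediately force $\A = \lambda \W^\star$.

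For $N \geq 4$, I would exploit the commutation identity written entrywise as $\sum_{k \in \mathcal N(j)} A_{i,k}\,W^\star_{k,j} = \sum_{k \in \mathcal N(i)} W^\star_{i,k}\,A_{k,j}$, where $\mathcal N(k)$ denotes the neighborhood of~$k$ in $\nabla_N$. The leaf structure at vertex~$1$ provides the starting point: since $\A$ and $\W^\star$ are both hollow and $\mathcal N(1) = \{2\}$, the $(1,2)$-equation collapses to $W^\star_{2,3}\,A_{1,3} = 0$, forcing $A_{1,3} = 0$. Iterating the $(i,i+1)$-equations for path indices $i$ away from the triangle yields the recursion $w_{i+1}\,A_{i,i+2} = w_{i-1}\,A_{i-1,i+1}$, which cascades the initial zero down the path and forces $A_{i,i+2} = 0$ for all such~$i$. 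Analogous cascades starting from the equations at $(1, j)$, $(2, j)$, and so on, with larger skip $j - i \geq 3$, should kill the remaining non-edge entries of $\A$ that lie away from the triangle.

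The main obstacle I anticipate is the closing step at the triangle $\{N-2, N-1, N\}$. The extra edge $(N-2, N)$ introduces supplementary terms in the commutation equations at $(N-2, N-1)$, $(N-2, N)$ and $(N-1, N)$; combined with the cascaded zeros (in particular $A_{N-4, N-2} = 0$ when $N \geq 6$), these three equations should force the remaining non-edge entries $A_{N-3, N-1}$ and $A_{N-3, N}$ to vanish by essentially the same triangle computation as in the base case $N=3$. It is precisely the extra edge $(N-2, N)$ that terminates the chase---consistent with the fact, observable directly for $N=4$, that the pure path $P_N$ is not diagonally identifiable for $N \geq 4$ whereas the kite $\nabla_N$ is.

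With $\A$ now supported on $\nabla_N$, a final pass through the commutation equations restricted to the edges propagates a single scaling factor: the path equations force the ratios $A_{k,k+1}/W^\star_{k,k+1}$ to be constant in $k$, and the triangle identities extend this common ratio to $A_{N-2, N}/W^\star_{N-2, N}$. This yields $\A = \lambda \W^\star$, establishing diagonal identifiability of $\W^\star$ and, via Proposition~\ref{p:id}, of a dense open subset of $\mathcal E(S)$.
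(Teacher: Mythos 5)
Your reduction is sound: by Proposition~\ref{p:id} it does suffice to exhibit a single diagonally identifiable matrix supported on $\nabla_N\subseteq S$, and the opening moves are correct (the $(1,2)$-equation kills $A_{1,3}$ and the $(i,i+1)$-equations cascade $A_{i,i+2}=0$ along the tail). The gap is in the claim that ``analogous cascades'' with skip $j-i\geq 3$ kill the remaining non-edge entries away from the triangle. They do not: those cascades only work for \emph{even} skips. For odd skips the local equations couple the non-edge entries to the (unknown, nonzero) edge entries of $\A$; e.g.\ the $(1,3)$-equation reads $w_{2,3}A_{1,2}+w_{3,4}A_{1,4}=w_{1,2}A_{2,3}$ and forces nothing. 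This is not a fixable bookkeeping issue but a structural one: the tail of the kite is bipartite, so on the bare path $P_N$ the matrix $\A=(\W^\star)^3$ is symmetric, hollow, commutes with $\W^\star$, and has $A_{1,4}=w_{1,2}w_{2,3}w_{3,4}\neq 0$. Since the kite satisfies exactly the same commutation equations as the path at every index pair not touching $\{N-2,N-1,N\}$, no cascade launched from the leaf can annihilate the odd-skip entries before the triangle has been used. Consequently, at your ``closing step'' the surviving unknowns are not just $A_{N-3,N-1}$ and $A_{N-3,N}$ but all $\Theta(N^2)$ odd-skip non-edge entries, each tied linearly to the edge entries of $\A$; the vanishing of these entries and the proportionality $A_{k,k+1}/W^\star_{k,k+1}=\mathrm{const}$ must be extracted \emph{simultaneously} from the global linear system, whereas your plan treats them as two sequential passes. (The conclusion is true --- one can check for $N=4,5$ that the full $\binom N2\times\binom N2$ system has corank $1$ for generic weights --- but the proposed elimination order does not establish it.)

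For comparison, the paper sidesteps the entrywise elimination entirely: it invokes the sufficient condition of Theorem~\ref{t:iden} with the nested sequence $V_3\subset\cdots\subset V_{N-1}$ obtained by deleting the tail vertices one at a time, each induced subgraph containing a smaller kite and hence being invertible. The engine there is Lemma~\ref{l:ident}, whose bordering argument forces an entire column of $\A$ to vanish at once via $\mathsf{M}'a=0$ with $\mathsf{M}'$ invertible --- precisely the global mechanism that replaces the failing entry-by-entry chase. If you want to salvage a direct computation, the cleanest route is to reprove that induction for the kite rather than to patch the cascades.
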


\begin{figure}[ht]
\center
\begin{tikzpicture}[line width =0.3mm,scale=0.65]
\vertex[circle,  minimum size=22pt](1) at  (0,0){\small 1} ;
\vertex[circle,  minimum size=22pt](2) at  (2.6,0) {\small 2};
\vertex[circle,  minimum size=22pt](3) at  (5.2,0){\small 3} ;
\vertex[circle,  minimum size=0pt](3b) at (6.4,0){};
\vertex[circle,  minimum size=0pt](4b) at (7.4,0){};
\vertex[circle,  minimum size=22pt](4) at  (8.6,0){\small N-3} ;
\vertex[circle,  minimum size=22pt](5) at (11.2,0){\small N-2};
\vertex[circle,  minimum size=22pt](6) at (13.451,1.3){\small N-1};
\vertex[circle,  minimum size=22pt](7) at (13.451,-1.3){\small N};

\draw (1)--(2)--(3)--(3b);
\draw (4b)--(4)--(5)--(6)--(7)--(5) ;
\node[text=black] at (6.9,0) {.....};

\end{tikzpicture}
\caption{The kite graph $\nabla_N$ on $N$ vertices.}
\label{fig:kite}
\end{figure}
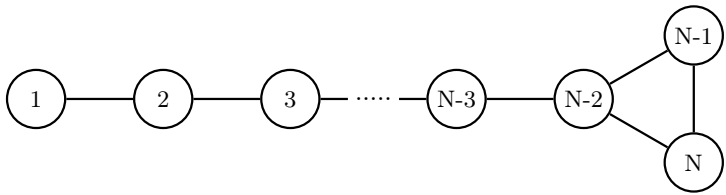

\noindent
Denote $G(N,p)$ the Erd\H{o}s-R\'enyi model on graphs of size $N$ where the edges are drawn independently with respect to the Bernoulli law of parameter $p$. Using Theorem \ref{t:iden}, one can prove that $\log N/N$ is a sharp threshold for diagonal identifiability in the Erd\H{o}s-R\'enyi model (see Section~\ref{sec:invertER}), it can be stated as follows.


\begin{theorem}
\label{t:invertER}
Diagonal identifiability in the Erd\H{o}s-R\'enyi model occurs with a sharp phase transition with threshold function $\log N/N$: for any $\varepsilon>0$, it holds
\begin{itemize}
\item If $p_N\geq(1+\varepsilon){\log N}/N$ and $G_N\sim G(N,p_N)$ then the probability that $\supp(G_N)$ is diagonally identifiable tends to $1$ as $N$ goes to infinity.
\item If $p_N\leq(1-\varepsilon){\log N}/N$ and $G_N\sim G(N,p_N)$ then the probability that $\supp(G_N)$ is diagonally identifiable tends to $0$ as $N$ goes to infinity.
\end{itemize}
\end{theorem}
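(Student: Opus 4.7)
I handle the two directions separately: the supercritical one rests on Proposition~\ref{prop:KiteGeneral}, the subcritical one on an explicit obstruction built from isolated vertices.

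In the subcritical regime $p_N \leq (1-\varepsilon)\log N/N$, the number $X_N$ of isolated vertices of $G_N$ has mean $\mathbb E[X_N]=N(1-p_N)^{N-1}\sim N^{\varepsilon}\to\infty$, and a standard second-moment estimate gives $X_N\geq 2$ with probability tending to $1$. Picking two isolated vertices $i\neq j$, any symmetric matrix $\mathsf W$ with $\supp(\mathsf W)=\supp(G_N)$ has vanishing $i$-th and $j$-th rows and columns. Hence the symmetric matrix
\[
\mathsf A := e_i e_j^\top + e_j e_i^\top
\]
satisfies $\supp(\mathsf A)\subseteq\overline{F_{\mathrm{diag}}}$ (zero diagonal), $\mathsf A\mathsf W=\mathsf W\mathsf A=0$, and is not collinear with $\mathsf W$ since $\mathsf A_{ij}=1$ while $\mathsf W_{ij}=0$. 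Thus no $\mathsf W\in\mathcal E(\supp(G_N))$ is $F_{\mathrm{diag}}$-identifiable, and by the dichotomy in Proposition~\ref{p:id} neither is $\supp(G_N)$.

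In the supercritical regime $p_N\geq (1+\varepsilon)\log N/N$, the plan is to show that $G_N$ contains a spanning copy of the kite $\nabla_N$ with probability tending to $1$, so that Proposition~\ref{prop:KiteGeneral} concludes. At this density we sit strictly above both the connectivity and Hamiltonicity thresholds, and triangles appear in abundance (the expected number of triangles is of order $(Np_N)^3\to\infty$). The idea is to first fix a triangle $\{a,b,c\}$ in $G_N$, then produce a Hamiltonian path in $G_N$ on $V\setminus\{b,c\}$ ending at vertex $a$, and finally append the two-edge extension $a$--$b$--$c$. The union of this path with the triangle edges $(a,b)$, $(b,c)$, $(a,c)$ is exactly a kite whose last three vertices are $a,b,c$. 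Existence of the required Hamiltonian path with prescribed endpoint is provided by a standard P\'osa rotation/Koml\'os--Szemer\'edi argument, which in fact yields Hamiltonian-connectivity at these densities.

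\textbf{Main obstacle.} The subcritical side is essentially immediate once isolated-vertex concentration is in hand; all of the substance lies on the supercritical side, where one has to upgrade vanilla Hamiltonicity to the kite structure (a Hamiltonian path whose last three vertices induce a triangle). This is within reach of classical random graph machinery, but it is not a pure black-box use of a Hamiltonicity theorem. A cleaner route may be to bypass kite-containment altogether and invoke the sharper necessary and sufficient conditions of Theorem~\ref{t:iden}, which likely give an easier-to-verify event than spanning a copy of $\nabla_N$.
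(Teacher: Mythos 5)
Your proposal is correct in substance and reaches the same two pillars as the paper (isolated vertices below the threshold, spanning kites above it), but both halves differ in execution. For the subcritical part, your explicit witness $\mathsf A=e_ie_j^\top+e_je_i^\top$ is a more elementary route than the paper's: the paper instead verifies that the \emph{necessary} condition of Theorem~\ref{t:iden} fails, since with two isolated vertices every induced subgraph on $N-1$ vertices still contains an isolated vertex and hence is singular. Your construction buys self-containedness (no appeal to Theorem~\ref{t:iden} at all, and no need for Proposition~\ref{p:id} either, since the witness kills \emph{every} matrix in $\mathcal E(\supp(G_N))$), at no cost. For the supercritical part, the paper avoids exactly the obstacle you flag. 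Rather than asking for a Hamiltonian path with a prescribed endpoint avoiding a previously located triangle---which, besides requiring Hamiltonian connectivity, creates a conditioning problem because the triangle $\{a,b,c\}$ is chosen from the same randomness used to build the path---it uses a two-round exposure: write $G_N\supseteq G^{(1)}\cup G^{(2)}$ with independent $G^{(1)}\sim G(N,p_1)$, $p_1=(\log N+\log\log N+\omega(N)/2)/N$, which is Hamiltonian with high probability by a black-box theorem, and $G^{(2)}\sim G(N,p_2)$, $p_2=\omega(N)/(2N)$. Sprinkling $G^{(2)}$ independently of the cycle, one finds with high probability a chord joining two cycle-vertices at distance two; opening the cycle appropriately, this chord closes the terminal triangle of a spanning kite, and Proposition~\ref{prop:KiteGeneral} concludes. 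If you want to make your version rigorous you would need either this sprinkling trick or a union bound guaranteeing Hamiltonian connectivity of $G_N[V\setminus\{b,c\}]$ simultaneously for all pairs $\{b,c\}$; as written, that step is the one genuine gap in your argument.
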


\noindent
In practice, one may expect that any target graph of size $N$ with no self-loops and degree bounded from below by $\log N$ is diagonally identifiable. In this case, it might be recovered from its eigenspaces. Conversely, small degree graphs (\textit{i.e.},~graphs with some vertices of degree much smaller than $\log N$) may not be identifiable. In this case, there is no hope to reconstruct it from its eigenspaces since there exist another small degree undirected weighted graph with the same eigenspaces.

\section{Some Concrete Models}
\label{a:model}

\subsection{Markov chains} 
We begin with an example treated in the companion papers \cite{fytp14,Barsotti2016}. Consider a Markov chain $(X_n)_{n \in \mathbb N}$ with finite state space $[1,N]$ and transition matrix $\mathsf{P}\in\mathbb R^{N\times N}$. Let~$(T_k)_{k \geq 1}$ be a sequence of random times such that the time gaps $\tau_k: =T_{k+1}-T_k$ are i.i.d random variables independent of $(X_n)_{n \in \mathbb N}$. One can show that the sequence $Y_k = X_{T_k}$ is also a Markov chain with transition matrix $ \mathsf Q = \mathbb E[\mathsf P^{\tau_1}] =: f(\mathsf{P})$ where $f$ is the generating function of~$\tau_k$. Indeed, this follows from noticing that
\begin{align} \mathbb P[Y_{k+1} = j \vert Y_k = i] & = \mathbb P[X_{T_{k+1}} = j \vert X_{T_k} = i] \nonumber \\
& = \sum_{t\geq 0} \mathbb P[X_{T_{k}+t} = j, \tau_{k}=t\vert X_{T_k} = i] \nonumber \\
& = \sum_{t \geq 0} \mathbb P[X_{t} = j \vert X_0 = i] \mathbb P[\tau_k=t] \nonumber
\\
& = \sum_{t \geq 0}  \mathbb P[\tau_1=t](\mathsf{P}^t)_{i,j}. \nonumber
\end{align}
Under regularity conditions, $\mathsf{Q}=f(\mathsf{P})$ can be estimated and one may recover~$\mathsf{P}$ from $\mathsf{Q}$ without any information on the distribution of the time gaps $\tau_k$. 

\subsection{Vectorial AutoRegressive process}
 Consider a stationary Vectorial AutoRegressive process of order one $(X_n)_{n \in \mathbb Z}$ verifying
$$X_{n+1} = \W X_n + \varepsilon_n\,, $$
with $\varepsilon_i$ i.i.d.\! centered random variables. Define as above $Y_k = X_{T_k}$ where $T_k$ are random times such that the time gaps $\tau_k= T_{k+1}-T_{k}$ are i.i.d. with generating function $f$. Then, it holds 
\[
\mathbb E[Y_{k+1}|Y_{k}] = \mathbb E [ \mathbb E[Y_{k+1}|Y_{k}, \tau_k]  | Y_k]  =  \sum_{j=0}^\infty \mathsf W^j Y_{k}\mathbb P(\tau_k =j)  = f(\mathsf W) Y_k ,
\] 
which allows us to estimate $\mathsf{K} = f(\W )$ and ultimately recover $\W $ from this estimate. 

\subsection{Ornstein-Uhlenbeck process}
The same property holds for the continuous time version of this process, namely a vectorial Ornstein-Uhlenbeck process observed at random times verifying
$$\mathrm{d}X_t = \W X_t\mathrm{d}t+\mathrm{d}B_t. $$
In this case, one can check that the random process $Y_k := X_{T_k}$ where the $T_k$'s are random times with i.i.d.~gaps $\tau_k=T_{k+1}-T_{k}$ satisfies
$$\mathbb E[Y_{k+1}|Y_{k}] =f(\W )Y_{k}\,,$$
for $f$ the Laplace transform of $\tau_1$\textemdash that is $f(\W )=\mathbb E[\exp(-\tau_1\W)]$. This follows from observing that 
\[
\forall t,u\in\mathbb R,\quad\mathbb E[X_{t+u}|X_u] =\exp(-t\W)X_u\,
\]
so that 
\[\mathbb E[Y_{k+1}|Y_{k}] =\mathbb E [ \mathbb E[X_{T_{k}+\tau_k}|X_{T_{k}}, \tau_k]  | X_{T_{k}}] =\mathbb E [ \exp(-\tau_k\W)X_{T_{k}} | X_{T_{k}}] =\mathbb E[\exp(-\tau_k\W)]Y_{k}\,,
\] 
by independence of $\tau_k$ and $Y_{k-1}$. 

\subsection{Gaussian Graphical models}
Our model is related to Gaussian Graphical models for which an overview can be found in the thesis \cite{verzelen2008Gaussian}. The reader may also consult the pioneering paper \cite{friedman2008sparse}. One may consider $\W $ as the precision matrix, \textit{i.e.},~the inverse of the covariance matrix, having some non zero entries described by a graph of dependencies. Using $f(x)= {x}^{-1}$, this falls into our setting, trying to recover $\W $ from the estimation of the covariance matrix. Of course, in this case, it is better to use the knowledge of~$f$, which certainly improves estimation. However, our procedure allows us to estimate the function~$f$ and heuristically validate the hypothesis $f(x)= {x}^{-1}$.

\subsection{Seasonal $\mathrm{VAR}$ structure}
We can also consider a toy example looking at a seasonal $\mathrm{VAR}$ structure without any randomness on the times of observations. Let $T$ be a positive integer, and $(u_k)_{k \in \mathbb{Z}},(v_k)_{k \in \mathbb{Z}}$ be some periodic sequences of period $T$. Consider the following model
\[
\forall k \in \mathbb{Z}\,,\quad Y_{k+1} = u_k Y_k + v_k \W  Y_k+\varepsilon_k\,,
\]
where $\varepsilon_k$ are independent and centered random variables. We may observe the model only at time gap intervals $T$ with some error, {\it i.e.}, $X_t = Y_{tT+k_0}+ \eta_t$ with $\eta_t$ centered and independent random variables. This falls into the general frame
\[
\mathbb E[X_t|X_{t-1}] = f(\W ) X_{t-1} \quad\mathrm{where}\quad f(x) := \prod_{k =1}^T(u_k  -v_k x)\,.
\]
In this case, $\mathsf{K} = f(\W )$ can be estimated from the observations.

\subsection{Spatial autoregressive Gaussian fields}
\label{sec:SpatialAR}
Note that Gaussian autoregressive processes on $\mathbb{Z}$ verify that the precision operator may be written as a polynomial of the adjacency operator of $\mathbb{Z}$. One natural way to extend this property (see for instance \cite{espinasse2014parametric}) is to define centered Gaussian autoregressive fields on a graph through the same relation between the covariance operator $\K$ and the adjacency operator~$\W$ (or the discrete Laplacian, depending on the framework) : $\K^{-1} = P(\W)$, with $P$ a polynomial of degree~$d$. In this framework, Graphical models methods will infer the graph of path of length~$d$, whereas our methods aims to recover $\W$. Note that this framework extends to ARMA spatial fields where $\K$ writes as a rational fraction of $\W$, and the property of commutativity between $\W$ and $\K$ still holds. 

In the previous cases, we assumed that we can not estimate directly $\W$. For spatio-temporal processes, this means that we do not have access to a full trajectory. It may be the case when the sample is drawn at random times, or when we can only sample independently under stationary measure of such process, for instance when observation times are a lot larger than the typical evolution time's scale of the process. If the whole trajectory is available, it would be better to use this extra information, see for instance \cite{pereira2010learning} for the Ornstein-Uhlenbeck case and \cite{periera2014support} for the non-linear diffusion case.

\section{Estimating the Support}
\label{sec:Estimation}

\subsection{Empirical Contrast: the Commutator}
\label{sec:commutator}
The methodology presented in the paper relies on the fact that the target matrix $\W$ commutes with the matrix $\mathsf{K}$. Indeed, recall that $\mathsf{K}:=f(\W)$, see Section~\ref{sec:Model} for a definition of this notation. In particular, there exist an orthonormal matrix $U$ and a diagonal matrix~$D$ with diagonal entries $(\lambda_1,\ldots,\lambda_1,\ldots,\lambda_s,\ldots,\lambda_s)$ with multiplicities $(\ell_1,\ldots,\ell_s)$ such that $\W=UDU^\top$ and $\mathsf{K}=Uf(D)U^\top$ where $f(D)$ is a diagonal matrix with diagonal entries $(f(\lambda_1),\ldots,f(\lambda_1),\ldots,f(\lambda_s),\ldots,f(\lambda_s))$ and same multiplicities as above. Since $f$ is assumed injective (and hence one to one on the spectrum of $\W$), the matrices~$\W$ and $\mathsf{K}$ have exactly the same eigenspaces in the sense that the eigenspace~$E_{\lambda_k}(\W)$ associated to~$\lambda_k$ (in the decomposition of $\W$) is exactly the one associated to $f(\lambda_k)$ (in the decomposition of $\mathsf{K}$), namely 
\begin{equation}
\label{eq:eigenspace_equality}
E_{f(\lambda_k)}(\mathsf{K})=E_{\lambda_k}(\W)
\end{equation}
and the dimension of this eigenspace is $\ell_k$, the multiplicity of $\lambda_k$. It follows that, when $F$-identifiability holds, the only solutions $\mathsf{A}$ with $\supp(\mathsf{A}) \subseteq \overline F$ to $ \mathsf{A} \mathsf{K} = \mathsf{K} \mathsf{A}$ are of the form $\mathsf{A} =t \mathsf{W}  $ for some $t \in \mathbb R$. 

\begin{remark}[Reminder on matrix perturbation theory]
Now, we do not observe $\mathsf{K}$ but a noisy version $\widehat{\mathsf{K}}$. For instance, $\widehat{\mathsf{K}}$ is the estimation of~$\mathsf{K}$ from a finite sample. The nice decomposition~\eqref{eq:eigenspace_equality} does not hold anymore changing $\mathsf{K}$ by $\widehat{\mathsf{K}}$. But there exist an orthonormal matrix $\widehat U$, a diagonal matrix $\widehat D$  with diagonal entries $(\widehat \mu_1,\ldots,\widehat\mu_N)$ such that $\widehat{\mathsf{K}}=\widehat U\widehat D\widehat U^\top$ and the following holds. Mirsky's inequality \citep[Corollary~4.12]{stewart1990matrix} and the Wedin'\! $\sin(\theta)$ theorem \citep[P.~260]{stewart1990matrix} show that, for~$\widehat{\mathsf{K}}$ such that $\|\mathsf{K}-\widehat{\mathsf{K}}\|$ is sufficiently small (with respect to the minimal separation $|f(\lambda_{k_1})-f(\lambda_{k_2})|$ between distinct eigenvalues), then for all $k=1,\ldots,s$, the eigenvalues $\widehat\mu_{(\sum_{t=1}^{k-1}\ell_{t})+1},\ldots,\widehat\mu_{\sum_{t=1}^{k}\ell_{t}}$ are close to $f(\lambda_k)$ and the space spanned by a group of eigenvectors, namely the vectors $\widehat U_{(\sum_{t=1}^{k-1}\ell_{t})+1},\ldots,\widehat U_{\sum_{t=1}^{k}\ell_{t}}$, is close to $E_{f(\lambda_k)}(\mathsf{K})=E_{\lambda_k}(\W)$ (more precisely, the orthonormal projections onto these spaces are close in Frobenius norm). 

If we consider $A$ such that $A\widehat{\mathsf{K}}=\widehat{\mathsf{K}}A $ then again these matrices share the same eigenspaces and we conclude that, up to label switching, the eigenvectors $(V_k)_{k=1}^N$ of $A$ are such that the spaces spanned by the group of eigenvectors $V_{(\sum_{t=1}^{k-1}\ell_{t})+1},\ldots,V_{\sum_{t=1}^{k}\ell_{t}}$ are close to the targets~$E_{\lambda_k}(\W)$, for $k=1,\ldots,s$.
\end{remark}

However, the choice $A=\W$ does not satisfy $A\widehat{\mathsf{K}}=\widehat{\mathsf{K}}A $ and we need to relax this identity. Furthermore, remark that  $\W\widehat{\mathsf{K}}-\widehat{\mathsf{K}}\W=\W \mathsf E-\mathsf E\W$ denoting $\mathsf E=\widehat{\mathsf{K}}-\K$ the estimation errors. It follows 
\eq
\label{eq:nearlycommuteW}
\frac{\Vert  \mathsf{W} \widehat{\mathsf{K}}-\widehat{\mathsf{K}} \mathsf{W} \Vert}{\lVert \mathsf{W}\lVert}=\frac{\Vert  \mathsf{W} {\mathsf{E}}-{\mathsf{E}} \mathsf{W} \Vert}{\lVert \mathsf{W}\lVert}\leq2\lVert \mathsf{E}\lVert\,.
\qe
In view of \eqref{eq:nearlycommuteW} and of the discussion above, we use the following cost function
$$ 
\mathsf{A} \mapsto \frac{\Vert  \mathsf{A} \widehat{\mathsf{K}}-\widehat{\mathsf{K}} \mathsf{A} \Vert}{\lVert \mathsf{A}\lVert},\quad \mathsf{A}\in\mathcal E( \overline F) \setminus \{0 \},$$
which aims at matrices for which the spaces spanned by some groups of it eigenspaces are close to the eigenspaces of the target. This empirical criterion was first used in \cite{fytp14} in a similar context to reflect that $\W$ is expected to nearly commute with $\widehat{\mathsf{K}}$, provided that~$\widehat{\mathsf{K}}$ is sufficiently close to its true value $\mathsf{K}$, see for instance~\eqref{eq:nearlycommuteW}.

\subsection{The $\ell_0$-approach}
\label{sec:L0approach}
Given an estimator $\widehat{\mathsf{K}}=\widehat{\mathsf{K}}_n$ of $\mathsf{K}$ build from a sample of size $n$ and a set of forbidden entries~$F$ reflecting \eqref{Hyp:Forbid}, we construct an estimator $\widehat S=\widehat S_n$ of the target support $S^\star:= \supp(\W)$ as a minimizer of the criterion $Q$ given by
\[
\forall S\subseteq \overline F,\quad
Q(S) := \min_{\mathsf{A}\in\mathcal E(S) \setminus \{0 \}} \frac{\Vert  \mathsf{A} \widehat{\mathsf{K}}-\widehat{\mathsf{K}} \mathsf{A} \Vert}{\lVert \mathsf{A}\lVert} + \lambda_n \vert S \vert,
\]
for some tuning parameter $\lambda_n >0$ and defining the minimum of an empty set as $\infty$. Recall that~$\mathcal E(S)$ is the set of symmetric matrices $\mathsf{A}$ such that $\operatorname{Supp}(\mathsf{A})\subseteq S$. Our estimator is 
\[
\widehat S\in\arg\min_{S\subseteq \overline F}Q(S)
\]
Furthermore, we assume that the estimator $\widehat{\mathsf{K}}$ converges toward $\mathsf{K}$ in probability $R_n$, namely
\eq
\label{a:conc}
\tag{$\mathrm{\mathbf{H}_2}$}
\forall t>0\,,\quad\mathbb{P}\big\{\Vert \widehat{\mathsf{K}}-\mathsf{K} \Vert \geq t  \big\} \leq R_n(t),
\qe
where $t\mapsto R_n(t)$ is non-increasing and such that, for all $t>0$, $R_n(t)\to0$ as $n$ goes to $\infty$.
\noindent

\begin{theorem}
\label{t:main1}
Assume that \eqref{a:conc} and \eqref{Hyp:Forbid} hold. If $\W$ is $F$-identifiable, then
\[
\mathbb{P}\big\{ \widehat{S} \neq S^\star \big\} \leq   R_n\Big(\frac{c_0(S^\star)-\lambda_n \vert S^\star \vert}{4}\Big) +R_n\Big( \frac{\lambda_n}{2} \Big)\,,
\]
where
\eq
\label{eq:def_c0}
c_0(S^\star) := \min_{\substack{ S \neq S^\star \\ \vert S \vert \leq \vert S^\star \vert}} \min_{ \mathsf{A} \in \mathcal E(S)} \frac{\Vert \mathsf{A}\mathsf{K}-\mathsf{K}\mathsf{A} \Vert}{\lVert \mathsf{A}\lVert}>0\,.
\qe
\end{theorem}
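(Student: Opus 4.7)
The plan is to convert the suboptimality inequality $Q(\widehat S)\leq Q(S^\star)$ satisfied by any minimizer $\widehat S$ into a lower bound on the estimation error $\Vert\widehat{\mathsf K}-\mathsf K\Vert$, and then invoke \eqref{a:conc}. Two elementary estimates drive the argument. Plugging $\mathsf A=\mathsf W$ into the infimum defining $Q(S^\star)$ and applying \eqref{eq:nearlycommuteW} directly give $Q(S^\star)\leq 2\Vert\widehat{\mathsf K}-\mathsf K\Vert+\lambda_n|S^\star|$. In the opposite direction, writing $\mathsf A\widehat{\mathsf K}-\widehat{\mathsf K}\mathsf A=(\mathsf A\mathsf K-\mathsf K\mathsf A)+\mathsf A(\widehat{\mathsf K}-\mathsf K)-(\widehat{\mathsf K}-\mathsf K)\mathsf A$ and invoking submultiplicativity of the Frobenius norm yields, for every symmetric $\mathsf A\neq 0$,
\[
\frac{\Vert\mathsf A\widehat{\mathsf K}-\widehat{\mathsf K}\mathsf A\Vert}{\Vert\mathsf A\Vert}\;\geq\;\frac{\Vert\mathsf A\mathsf K-\mathsf K\mathsf A\Vert}{\Vert\mathsf A\Vert}\;-\;2\Vert\widehat{\mathsf K}-\mathsf K\Vert\,,
\]
so that, after taking the infimum over $\mathcal E(S)\setminus\{0\}$ and adding the penalty, $Q(S)\geq c_0(S)-2\Vert\widehat{\mathsf K}-\mathsf K\Vert+\lambda_n|S|$ for every competitor $S\subseteq\overline F$ with $S\neq S^\star$ and $|S|\leq|S^\star|$, where $c_0(S)$ denotes the inner minimum appearing in \eqref{eq:def_c0}.

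Before combining the two estimates, I would verify the strict positivity of $c_0(S^\star)$ asserted in \eqref{eq:def_c0}. For each admissible $S$, the map $\mathsf A\mapsto\Vert\mathsf A\mathsf K-\mathsf K\mathsf A\Vert/\Vert\mathsf A\Vert$ is continuous on the compact unit sphere of the finite-dimensional space $\mathcal E(S)$ and hence attains its infimum. A vanishing minimum would produce a nonzero symmetric $\mathsf A\in\mathcal E(\overline F)$ commuting with $\mathsf K$\textemdash equivalently with $\mathsf W$, since the two operators share eigenspaces, see \eqref{eq:eigenspace_equality}; $F$-identifiability would then force $\mathsf A=\lambda\mathsf W$ for some $\lambda\in\mathbb R$, which is incompatible with $\supp(\mathsf A)\subseteq S$ since $|S|\leq|S^\star|$ and $S\neq S^\star$ together prevent $S\supseteq S^\star=\supp(\mathsf W)$. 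Finiteness of the family of admissible supports then gives $c_0(S^\star)>0$.

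The rest is a case analysis on the cardinality of $\widehat S$. On the event $\{\widehat S\neq S^\star,\ |\widehat S|\leq|S^\star|\}$, combining the two bounds above with $Q(\widehat S)\leq Q(S^\star)$ and dropping the nonnegative term $\lambda_n|\widehat S|$ yields $\Vert\widehat{\mathsf K}-\mathsf K\Vert\geq(c_0(S^\star)-\lambda_n|S^\star|)/4$. On the complementary event $\{|\widehat S|>|S^\star|\}$, the penalty alone forces $Q(\widehat S)\geq\lambda_n(|S^\star|+1)$, so the suboptimality inequality delivers $\Vert\widehat{\mathsf K}-\mathsf K\Vert\geq\lambda_n/2$. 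A union bound together with \eqref{a:conc} then produces the announced two-term probability bound. The only genuinely delicate point, in my view, is the positivity of $c_0(S^\star)$: it needs to combine compactness of the unit sphere of $\mathcal E(S)$, finiteness of the family of admissible supports, and the precise geometric content of $F$-identifiability; the remainder of the proof is bookkeeping around \eqref{eq:nearlycommuteW}.
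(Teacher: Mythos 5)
Your proof is correct and follows essentially the same route as the paper's: the same two-sided perturbation bound on the commutator, the same upper bound $Q(S^\star)\leq 2\Vert\widehat{\mathsf K}-\mathsf K\Vert+\lambda_n|S^\star|$, and the same split of competitors according to whether $|\widehat S|\leq|S^\star|$ or $|\widehat S|>|S^\star|$. The only difference is that you spell out the compactness-plus-identifiability argument for $c_0(S^\star)>0$, which the paper asserts in one line; that elaboration is correct and harmless.
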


\noindent A proof of Theorem \ref{t:main1} is given in Section \ref{proof:ThmL0}.

\begin{corollary}\label{c:main1}
Under the assumptions of Theorem \ref{t:main1}, if it holds
\[
\lambda_n \rightarrow 0\quad \mathrm{and}\quad \sum_{n \in \mathbb N} R_n\Big( \frac{\lambda_n}{2} \Big)< +\infty\,,
\]
then one has $\widehat{S} \rightarrow S^\star$ almost surely.
\end{corollary}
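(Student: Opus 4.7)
The plan is to combine the high-probability bound from Theorem~\ref{t:main1} with the Borel--Cantelli lemma. Concretely, I want to show that the series $\sum_n \mathbb{P}\{\widehat{S}_n \neq S^\star\}$ converges, which by Borel--Cantelli forces $\widehat{S}_n = S^\star$ for all but finitely many $n$ almost surely, hence $\widehat{S}_n \to S^\star$ a.s.

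From Theorem~\ref{t:main1}, the probability of error is controlled by two tail terms, $R_n\big((c_0(S^\star)-\lambda_n|S^\star|)/4\big)$ and $R_n(\lambda_n/2)$. The summability of the second one is exactly the hypothesis; the bulk of the work is to show the first one is also summable. Here the idea is simple: since $\lambda_n \to 0$ and $c_0(S^\star) > 0$ (by Theorem~\ref{t:main1}, or directly by $F$-identifiability), there exists $n_0$ such that for all $n \geq n_0$,
\[
\frac{c_0(S^\star) - \lambda_n |S^\star|}{4} \;\geq\; \frac{c_0(S^\star)}{8} \;\geq\; \frac{\lambda_n}{2}\,.
\]
Using that $R_n$ is non-increasing, the first term is then bounded by $R_n(\lambda_n/2)$ for all $n \geq n_0$.

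Plugging this back into the bound of Theorem~\ref{t:main1}, I obtain, for $n \geq n_0$,
\[
\mathbb{P}\big\{\widehat{S}_n \neq S^\star\big\} \;\leq\; 2\,R_n(\lambda_n/2)\,.
\]
The assumption $\sum_n R_n(\lambda_n/2) < \infty$ now gives $\sum_n \mathbb{P}\{\widehat{S}_n \neq S^\star\} < \infty$, and Borel--Cantelli concludes. The only subtlety worth double-checking is that $c_0(S^\star) > 0$ under the standing assumption of $F$-identifiability of $\mathsf{W}$, but this is asserted in the statement of Theorem~\ref{t:main1} itself and so may be taken for granted. No step appears to present a real obstacle; the argument is essentially a deterministic chaining of the monotonicity of $R_n$ with the vanishing of $\lambda_n$, followed by Borel--Cantelli.
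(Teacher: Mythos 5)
Your proof is correct and follows the same route as the paper, which simply states that the corollary is a direct consequence of Theorem~\ref{t:main1} via the Borel--Cantelli lemma; your argument fills in the (routine) details of why the first tail term is eventually dominated by $R_n(\lambda_n/2)$ using the monotonicity of $R_n$ from \eqref{a:conc} and $\lambda_n \to 0$.
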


\noindent Note that, based on the upper bound in Theorem \ref{t:main1}, a good scaling may be $\lambda_n^\star= \frac{c_0(S^\star)}{\vert S^\star \vert +4}$ leading to the upper bound 
\[
\mathbb{P}\big\{ \widehat{S} \neq S^\star \big\} \leq  2 R_n\Big(\frac{c_0(S^\star)}{2\vert S^\star \vert +8}\Big)\xrightarrow{n\to\infty}0
\] 
which is optimal up to a constant less than $2$. Interestingly, this oracle choice $\lambda_n^\star$ does not depend on $n$ but this calibration is not relevant since both $c_0(S^\star)$ and $\vert S^\star \vert$ are unknown. Alternatively, we may choose a sequence $\lambda_n$ decreasing slowly to $0$ to ensure both conditions of Corollary \ref{c:main1}.

\subsection{Edge significance based on the commutator criterion}
\label{subsec:backward}
The $\ell_0$-approach meets with the curse of dimensionality. In practice, a backward methodology provides a computationally feasible alternative to the support reconstruction problem. Starting from the maximal acceptable support $\overline F$, the idea of the backward procedure is to remove the least significant entries one at a time and stop when every entry is significant. Using the corresponding small case letter to denote the vectorization of a matrix, {\it e.g.}, $a=\vect(\mathsf{A}) =  (\mathsf{A}_{11},...,\mathsf{A}_{N1},...,\mathsf{A}_{1N},..., \mathsf{A}_{NN})^\top$, significancy can be leveraged using the Frobenius norm of the commutator operator $a \mapsto \Delta(\mathsf{K}) a = \vect(\mathsf{K} \mathsf{A} - \mathsf{A} \mathsf{K})$, where
$$ \Delta(\mathsf{K}) = \id \otimes \mathsf{K} - \mathsf{K} \otimes \id  \ \ \in \mathbb R^{N^2 \times N^2}  $$
and $\otimes$ denotes the Kronecker product. Indeed, searching for the target $\W$ in the commutant of~$\mathsf{K}$ reduces to searching for $w = \vect(\W)$ in $\ker ( \Delta(\mathsf{K}))$, the kernel of $\Delta(\mathsf{K})$. Because the Frobenius norm coincides with the Euclidean norm of the vectorization, the functions $\mathsf{A} \mapsto \Vert \widehat{\mathsf{K}} \mathsf{A} - \mathsf{A} \widehat{\mathsf{K}} \Vert^2$ and $a \mapsto \Vert \Delta(\widehat{\mathsf{K}}) a \Vert^2$ can be used indistinctly as cost functions. Minimizing this criterion over model spaces of decreasing size, we may consider sequences of least-squares estimates in the sequel.

\subsubsection*{Assumptions}
Assume the three following hypotheses \eqref{e:HypSigma}, \eqref{e:Hyp1} and \eqref{eq:id}.

$\circ$ Deriving the asymptotic law of least-squares estimators, we may assume that the estimate~$\widehat{\mathsf{K}}$ is such that 
\begin{equation}
\label{e:HypSigma}
\tag{$\mathrm{\mathbf{H}_\Sigma}$}
\sqrt n (\widehat k - k) \xrightarrow[n \to \infty]{d} \mathcal N(0,\Sigma),   
\end{equation}
where $\Sigma$ is a $N^2 \times N^2$ covariance matrix (either known or that can be estimated). For instance, one can think of $\widehat{\mathsf{K}}$ as the empirical covariance when observing a sample of vectors of covariance~${\mathsf{K}}$. This condition is verified for instance in the framework considered in \cite{fytp14,Barsotti2016}. Note that asymptotic normality is a standard ground base investigating any least-squares procedure.

$\circ$ In order to exclude the trivial solution $a=0$, the target $\W$ is assumed normalized
\begin{equation}
\label{e:Hyp1}
\tag{$\mathrm{\mathbf{H}_1}$}
\mathbf 1^\top w =1,   
\end{equation}
where $\mathbf 1$ has all its entries equal to one. Because the available information on $\W$ is of spectral nature and as such, is scale-invariant, a normalization of some kind is crucial for the identifiability. Here, the condition $\mathbf 1^\top w =1$ achieves two goals: preventing the null matrix form being a solution and making the problem identifiable. 
\begin{remark}
The main drawback of this normalization concerns the situation where the entries of $\W$ sum up to zero, in which case the normalization is impossible. If the context suggests that the solution may be such that $\mathbf 1^\top w =0$, a different affine normalization $\mathbf v^\top w=1$ (with any fixed vector $\mathbf v$) must be used, without major changes in the methodology. In practice, one may consider the vector~$\mathbf v$ at random (for instance with isotropic law), so that \eqref{e:Hyp1} is almost surely fulfilled for any fixed target $w$.

Observe that if one knows in advance that the target $\W$ has nonnegative entries then the normalization~\eqref{e:Hyp1} is acceptable.
\end{remark}

$\circ$ For $S $ a support included in $\overline F$, we aim at a solution in the affine space 
$$ \mathcal A_S := \{ a=\vect(\mathsf{A}): \supp(\mathsf{A}) \subseteq S, \ \mathsf{A}=\mathsf{A}^\top \ , \ \mathbf 1^\top a = 1 \}. $$ 
with  linear difference space given by
$$\mathcal L_{S} := \{ a=\vect(\mathsf{A}): \supp(\mathsf{A}) \subseteq S, \ \mathsf{A}=\mathsf{A}^\top \ , \ \mathbf 1^\top a = 0 \}\,.$$
By abuse of notation, $\mathcal A_S$ may refer both to the space of matrices or their vectorizations. To find the target support $S^\star$\!, one must exploit the fact that the vector $w$ lies in the intersection of $\ker( \Delta(\mathsf{K}))$ and $\mathcal A_{\overline F}$. Actually, $w$ can then be recovered if the intersection is reduced to the singleton $\{w \}$. In this case, the matrix $\W$ and its support $S^\star $ are $F$-identifiable. Hence, we assume that
\begin{equation}\label{eq:id}
\tag{$\mathrm{\mathbf{H}_{Id}}$}
 \ker ( \Delta(\mathsf{K}) ) \cap \mathcal L_{\overline F} = \{0 \},
 \end{equation}
 which is implied by $F$-identifiability, see Definition \ref{def:Fidentifiability}.

 \subsubsection*{Asymptotic normality and a significance test}
The framework under consideration can be viewed as a heteroscedastic linear regression model with noisy design for which $w = \vect(\W)$ is the parameter of interest. Indeed, consider for each support $S \subseteq \overline F$ a full-ranked matrix $\Phi_S \in \mathbb R^{N^2 \times \dim(\mathcal A_S)}$ whose column vectors form a basis of~$\mathcal L_S$. Assuming that $\W$ is $F$-identifiable and taking $S \subseteq \overline F$, the operator $\Delta(\mathsf{K}) \Phi_S$ is one-to-one. In this case, evaluating the commutator $a \mapsto \Delta(\mathsf{K}) a$ over $\mathcal A_S$ reduces to considering the map
$$ b \mapsto \Delta(\mathsf{K}) (a_0 - \Phi_S b) \,, \quad b \in \mathbb R^{\dim(\mathcal A_S)},    $$
with $a_0$ chosen arbitrarily in $\mathcal A_S$. When replacing the unknown $\Delta(\mathsf{K})$ with its estimate~$\Delta(\widehat{\mathsf{K}})$, the minimization of the criterion $a \mapsto \Vert \Delta(\widehat{\mathsf{K}}) a \Vert^2$ over $\mathcal A_S$ can be written similarly as a linear regression framework where the parameter of interest is estimated by
\begin{equation} \label{hatbetaS} \widehat \beta_S \in \arg \min_{b \in \mathbb R^{\dim(\mathcal A_S)}} \Vert \Delta(\widehat{\mathsf{K}}) (a_0 - \Phi_S b)  \Vert^2. 
\end{equation}
We recognize a linear model with response $y = \Delta(\widehat{\mathsf{K}}) a_0$ and noisy design matrix~$X = \Delta(\widehat{\mathsf{K}})\Phi_S$. In this setting, remark that $w = a_0 - \Phi_S \beta$ with $\beta$ the unique solution to~$\Delta(\mathsf{K}) ( a_0 - \Phi_S \beta) = 0$. Denoting by $\mathsf{M}^\dagger$ the pseudo-inverse of a matrix $\mathsf{M}$, we deduce the following result.

\begin{theorem}
\label{asymptbeta} If $S^\star \subseteq S$, the estimator $\widehat \beta_S$ is asymptotically Gaussian with
$$  \sqrt n (\widehat \beta_S - \beta) \xrightarrow[n \to \infty]{d} \mathcal N (0, \Omega_S ),    $$
where $\Omega_S := \big(\Phi_S^\top \Delta(\mathsf{K})\big)^\dagger \Delta(\mathsf{W}) \Sigma \Delta(\mathsf{W}) \big(\Delta(\mathsf{K}) \Phi_S \big)^\dagger$.
\end{theorem}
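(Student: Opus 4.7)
The plan is to recast the problem as a (heteroscedastic) linear regression with noisy design, show the ``signal cancels out'' when $S^\star\subseteq S$, and then apply Slutsky's lemma together with the continuity of the pseudo-inverse at full-rank matrices.

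First I would exploit the inclusion $S^\star \subseteq S$ to pin down $\beta$ exactly. Since $w\in\mathcal A_S$, one can write $w = a_0 - \Phi_S\gamma$ for some $\gamma$; because $\Delta(\mathsf K)w=0$, such a $\gamma$ solves $\Delta(\mathsf K)(a_0-\Phi_S\gamma)=0$, and the $F$-identifiability hypothesis~\eqref{eq:id} forces $\Delta(\mathsf K)\Phi_S$ to be one-to-one, so $\beta=\gamma$. In particular $a_0-\Phi_S\beta = w$, which is the key ``no-bias'' identity.

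Next, I would write the closed-form OLS solution. Setting $X_n := \Delta(\widehat{\mathsf K})\Phi_S$, the injectivity of $\Delta(\mathsf K)\Phi_S$ together with $\widehat{\mathsf K}\to\mathsf K$ in probability (consequence of~\eqref{e:HypSigma}) ensures $X_n$ has full column rank with probability tending to one, so
\[
\widehat\beta_S = X_n^\dagger \Delta(\widehat{\mathsf K})a_0, \qquad \widehat\beta_S - \beta = X_n^\dagger \Delta(\widehat{\mathsf K})(a_0 - \Phi_S\beta) = X_n^\dagger \Delta(\widehat{\mathsf K})w,
\]
using $X_n^\dagger X_n = \operatorname{I}$. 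Now comes the crucial computation: since $\Delta(\mathsf K)w=0$,
\[
\Delta(\widehat{\mathsf K})w = \Delta(\widehat{\mathsf K}-\mathsf K)w = \vect\bigl[(\widehat{\mathsf K}-\mathsf K)\W - \W(\widehat{\mathsf K}-\mathsf K)\bigr] = -\Delta(\W)(\widehat k - k),
\]
by bilinearity of $(\mathsf A,\mathsf B)\mapsto\vect[\mathsf A,\mathsf B]$ and antisymmetry of the commutator. This turns the problem into a linear functional of $\widehat k - k$.

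Finally, I would combine the two pieces via Slutsky. Rescaling,
\[
\sqrt n\bigl(\widehat\beta_S - \beta\bigr) = -X_n^\dagger\, \Delta(\W)\,\sqrt n(\widehat k - k).
\]
By \eqref{e:HypSigma}, $\sqrt n(\widehat k-k)\xrightarrow{d}\mathcal N(0,\Sigma)$, while the pseudo-inverse $X_n^\dagger$ converges in probability to $(\Delta(\mathsf K)\Phi_S)^\dagger$ (continuity of the Moore--Penrose inverse on the open set of matrices of maximal rank). Slutsky's lemma yields
\[
\sqrt n(\widehat\beta_S - \beta)\xrightarrow[n\to\infty]{d} -\bigl(\Delta(\mathsf K)\Phi_S\bigr)^\dagger \Delta(\W)\, Z,\qquad Z\sim\mathcal N(0,\Sigma),
\]
which is centred Gaussian with covariance $(\Delta(\mathsf K)\Phi_S)^\dagger \Delta(\W)\Sigma \Delta(\W)\bigl[(\Delta(\mathsf K)\Phi_S)^\dagger\bigr]^\top$; using symmetry of $\Delta(\W)$ and the identity $(M^\top)^\dagger=(M^\dagger)^\top$, this covariance is precisely~$\Omega_S$.

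The only non-routine step is the stable-rank argument underpinning Slutsky: one must check that $\Delta(\widehat{\mathsf K})\Phi_S$ has full column rank on an event of probability tending to one, so that the OLS closed form is valid and the pseudo-inverse behaves continuously. This follows from the identifiability assumption~\eqref{eq:id} and the fact that the set of full-rank matrices is open, combined with the consistency $\widehat{\mathsf K}\xrightarrow{\mathbb P}\mathsf K$ implied by~\eqref{e:HypSigma}.
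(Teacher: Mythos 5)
Your proposal is correct and follows essentially the same route as the paper's proof: express $\widehat\beta_S$ via the pseudo-inverse of the noisy design $\Delta(\widehat{\mathsf K})\Phi_S$, use the antisymmetry of the commutator together with $\Delta(\mathsf K)w=0$ to rewrite $\Delta(\widehat{\mathsf K})w$ as $-\Delta(\W)(\widehat k-k)$, and conclude by Slutsky with continuity of the Moore--Penrose inverse at full-rank matrices; you merely make explicit two points the paper leaves implicit (the identification of $\beta$ via $w=a_0-\Phi_S\beta$ and the high-probability full-rank event). Your sandwich covariance $\big(\Delta(\mathsf K)\Phi_S\big)^\dagger \Delta(\W)\Sigma\Delta(\W)\big[\big(\Delta(\mathsf K)\Phi_S\big)^\dagger\big]^\top$ is the dimensionally coherent form of $\Omega_S$, coinciding with the paper's expression once the two outer pseudo-inverse factors are read in the order imposed by $\big(\Phi_S^\top\Delta(\mathsf K)\big)^\dagger=\big[\big(\Delta(\mathsf K)\Phi_S\big)^\dagger\big]^\top$.
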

We then have
\begin{equation}
\label{eq:ws}
 \widehat w_S = \vect(\widehat{\mathsf{W}}_S)=  \arg \min_{a \in \mathcal A_S} \Vert \Delta(\widehat{\mathsf{K}}) a \Vert^2 = a_0 - \Phi_S \widehat \beta_S \,.
 \end{equation}
The asymptotic distribution of $\widehat w_S$ follows directly from Theorem \ref{asymptbeta}, 
\begin{equation}\label{norm_w} \sqrt n (\widehat w_s - w) \xrightarrow[n \to \infty]{d} \mathcal N \big(0, \Phi_S \Omega_S \Phi_S^\top \big).  \end{equation}
The limit covariance matrix is unknown, but plugging the estimates $\widehat{\mathsf{W}}_S$, $\widehat{\mathsf{K}}$ and $\widehat \Sigma$ yields an estimator $\Phi_S \widehat \Omega_S \Phi_S^\top$, which is consistent under the $F$-identifiability assumption. In particular, the diagonal entry of $\Phi_S \widehat \Omega_S \Phi_S^\top$ associated to the $(i,j)$-entry of $\mathsf W$, which we denote $\widehat \sigma^2_{S,ij}$, provides a consistent estimator for the asymptotic variance of $\widehat{\mathsf{W}}_{S,ij}$. As a result, the statistic
\begin{equation}
\label{e:test_stat}
 \tau_{ij}(S) := \sqrt n \frac{\widehat{\mathsf{W}}_{S,ij}}{\widehat \sigma_{S,ij}}   
\end{equation}
can be used to measure the relative significance of the estimated entry $\widehat{\mathsf{W}}_{S,ij}$. The backward support selection procedure is then implemented by the recursive algorithm as follows. 

\bigskip 

\begin{algorithm}[H]
\caption{Backward algorithm for support selection}
  \SetAlgoLined
  \KwData{A set of forbidden entries $F$, a matrix $\widehat{\K}$.}
  \KwResult{A sequence of estimators $\widehat \W_{S_1},\widehat \W_{S_2},...$ with nested supports $S_1 \supset S_2 \supset ...$.}
    \BlankLine
    \begin{algorithmic}[1]
  \STATE Start with the maximal acceptable support $S_1=\overline F$,
  \STATE At each step $k$, compute the statistics $\tau_{ij}(S_k)$ for all $(i,j) \in S_k$,
  \STATE Remove the least significant edge $(i,j)$ which minimizes $| \tau_{ij}(S_k)| $ for $ (i,j) \in S_k$, and set $S_{k+1} = S_k \setminus \{ (i,j), (j,i) \}$,
  \STATE Stop when all edges have been removed.	
  \end{algorithmic}
\label{alg:BackwardSupportSelection}
\end{algorithm}

\bigskip

The backward algorithm produces a sequence of nested supports that one can choose to stop once all the edges are judged significant, that is, when all the statistics $\tau_{ij}(S_k)$, $(i,j) \in S_k$ exceed in absolute value some fixed threshold $\tau_0$. Owing to the asymptotic normality of $\widehat{\mathsf{W}}_{S,ij}$ shown in Eq.\! \eqref{norm_w}, the $(1-\frac \alpha 2)$-quantile of the standard Gaussian distribution would appear as a reasonable  choice for the threshold $\tau_0$, as it boils down to performing an asymptotic significance test of level $\alpha$. However, due to the slow convergence to the limit distribution and the tendency to overestimate the variance for small sample sizes (see Figure~\ref{fig:conv}), a threshold based on the Gaussian quantile inevitably  leads to an overly large estimated support. Nevertheless, we show that an adaptive calibration of the threshold can be achieved by considering the overall behavior of the commutator~$\Delta(\widehat{\mathsf{K}}) \widehat w_{S_m}$ computed over the nested sequence of active supports. 

\begin{figure}[h]
 \centering
 \includegraphics[width = 11cm,height = 5cm]{./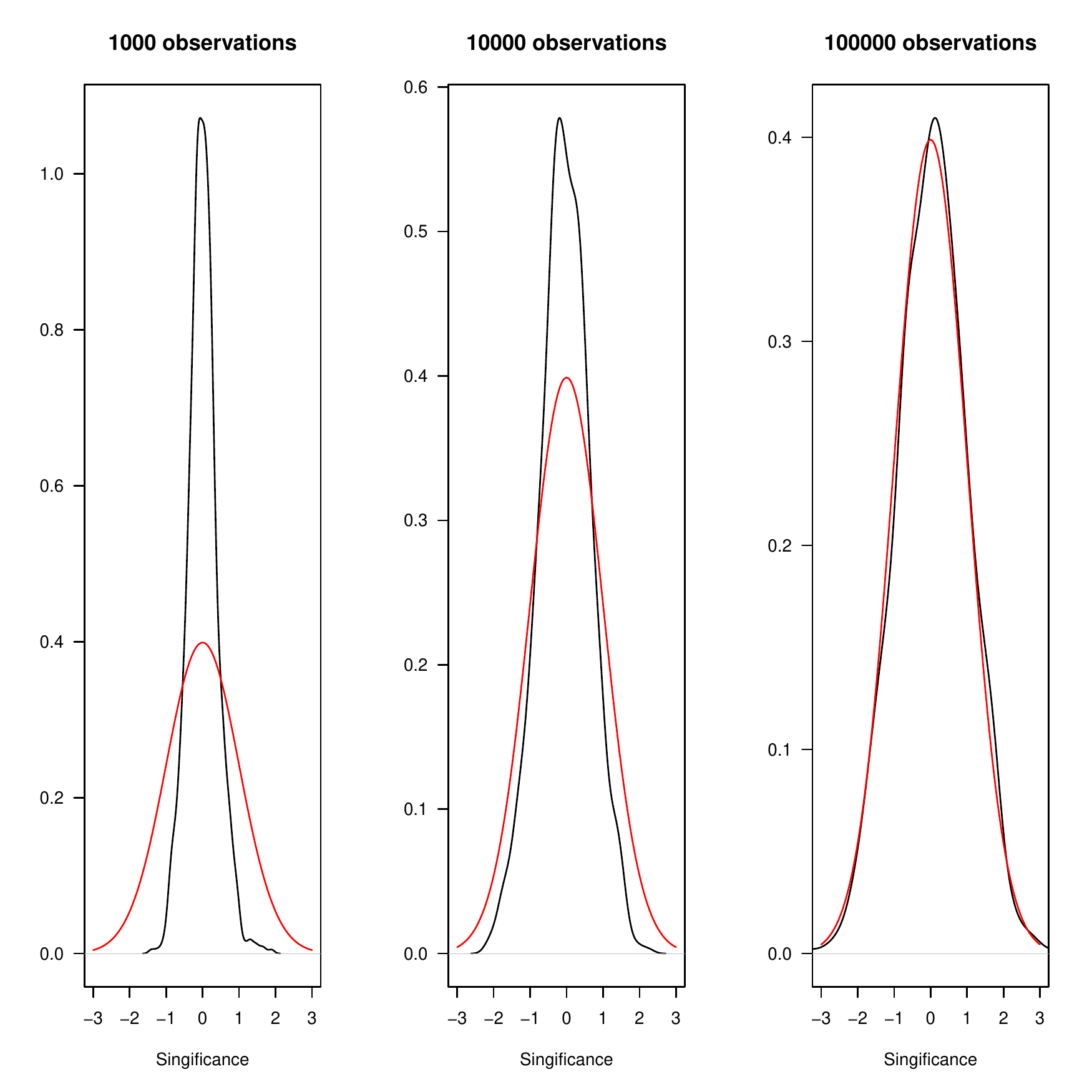}
 \caption{Estimated density of the statistic $\tau_{ij}(S)$ for an edge $(i,j) \in S \setminus S^\star$ compared to its theoretical Gaussian limit distribution, for samples of size $n=1000$ (left), $n=10000$ (center) and $n=100000$ (right).}
 \label{fig:conv}
\end{figure}


\subsubsection*{Calibration of the threshold by cross-validation}

By removing the least significant edge at each step, the backward algorithm generates a sequence of nested active supports $S_1 \supset \cdots \supset S_\ell$, that we refer to as a ``trajectory''. Along this trajectory, we compute the empirical contrast defined by
\eq
\label{e:Critere_Normalise}
\forall  S\subseteq\overline F  ,\quad
S \mapsto \Crit(\widehat{\mathsf{W}}_S,\widehat\K):=\frac{\Vert  \widehat{\mathsf{W}}_S \widehat{\mathsf{K}}-\widehat{\mathsf{K}}  \widehat{\mathsf{W}}_S \Vert}{\lVert  \widehat{\mathsf{W}}_S \lVert}\,.
\qe
Note that computing this criterion boils down to compute $\widehat{\mathsf{W}}_S$ which is a simple projection onto~$\mathcal A_S$ as shown in \eqref{eq:ws}.

When the true support $S^\star$ lies in the trajectory, one expects to observe a ``gap'' in the sequence $j \mapsto \Crit(\widehat{\mathsf{W}}_{S_j},\widehat\K)$
when $S_j$ goes from $S^\star$ to a smaller support. Indeed:
\begin{itemize}
	\item For $S^\star \subseteq S$, the target $\W$ is consistently estimated by $\widehat \W_S$ so that $\Crit(\widehat{\mathsf{W}}_S,\widehat\K)$ tends to zero at rate~$\sqrt n$,
	\item For $S\subsetneq S^\star $, the lower bound $\Vert \mathsf A \widehat{\mathsf{K}} - \widehat{\mathsf{K}}  \mathsf A  \Vert \geq \Vert \mathsf A \mathsf{K} -\mathsf{K}  \mathsf A  \Vert - 2 \Vert \widehat{ \mathsf{K}} - \mathsf{K} \Vert \Vert \mathsf A \Vert$ yields
\begin{align} \label{eq:threshold}
\Crit(\widehat{\mathsf{W}}_S,\widehat\K) = \frac{\Vert  \widehat{\mathsf{W}}_S \widehat{\mathsf{K}}-\widehat{\mathsf{K}}  \widehat{\mathsf{W}}_S \Vert}{\lVert  \widehat{\mathsf{W}}_S \lVert} 
\geq c(S)  - 2 \Vert \widehat{\K} - \K \Vert
\end{align}
with $ c(S) : = \min_{A \in \mathcal A_S} \Vert  \mathsf A \mathsf{K} -\mathsf{K}\mathsf A  \Vert/\lVert \mathsf A \lVert$ a positive constant. In particular, one has 
\[
\min_{S\subsetneq S^\star }  c(S)\geq
\min_{\substack{ S \neq S^\star \\ \vert S \vert \leq \vert S^\star \vert}}  c(S)=c_0(S^\star)>0
\]
where $c_0(S^\star)$ is defined in~\eqref{eq:def_c0}.
\end{itemize}

 {In some way, $c_0(S^\star)$ measures the amplitude of the signal: one expects to be able to recover the target $\W$ when the  estimation error $\Vert \widehat \K - \K \Vert$ reaches at least the same order as $c_0(S^\star)$. The true support~$S^\star$ then }corresponds to a transitional gap in the contrast curve that can be captured by a suitably chosen threshold~$t>0$. Since $\widehat \K$ converges toward $\K$ in probability, any threshold $0< t < c_0(S^\star)$ will work with probability one asymptotically.

\begin{remark} The condition that $S^\star$ lies in the trajectory of nested supports is crucial to detect the commutation gap, although seldom verified in practice due to the tremendous amount of testable supports. This issue is specifically targeted by the Bagging version of the backward algorithm discussed in Section \ref{subsec:graphtri}. 
\end{remark}

An obstacle  to the detection of the commutation gap is the increasing behavior of the commutator over the nested trajectory $S_1 \supset \cdots \supset S_\ell$. This phenomenon, indirectly caused by the dependence between the trajectory and $\widehat \K$, can be annihilated when considering the empirical contrast over a trajectory built from a training sample. In fact, the monotonicity can even be ``reversed'' before reaching the true support if the $\widehat \W_{S_j}$ are estimated independently from $\widehat \K$. 
This can be explained as follows: Consider the ideal scenario where estimators $\widetilde \W_{S_1},...,\widetilde \W_{S_\ell}$ are built from the backward algorithm using an estimator $\widetilde \K$ independent from $\widehat \K$. We assume moreover that the true support $S^\star$ lies in the trajectory $S_1 \supset ... \supset S_\ell$. The trick is to write
$$ \Delta(\widehat \K) \widetilde w_{s_j}  =\Delta(\widetilde \K) w + \Delta(\K) \widetilde w_{S_j}  + \Delta(\widehat \K - \K)  (\widetilde w_{S_j} - w), $$
and to analyze the three terms separately:
\begin{itemize}
	\item The term $\Delta(\widehat \K) w $ has no influence as it is  common to all supports in the trajectory.
	\item The term $ \Delta(\K) \widetilde w_{S_j} $ approaches zero as~$ \widetilde w_{S_j} $ gets closer to $w$. Heuristically, the variance of~$\widetilde w_{S_j}$, and incidentally that of $ \Delta(\K) \widetilde w_{S_j} $, is larger for over-fitting supports  $ S \supseteq S^\star$. This results in  the sequence $j \mapsto \Delta(\K) \widetilde w_{S_j} $ being stochastically decreasing as $S_j$ approaches~$S^\star$ from above. On the other hand, the bias is expected to dominate  once the trajectory passes through the true value $S^\star$, making the remaining of the sequence $ \Delta(\K) \widetilde w_{S_j} $increase stochastically.
	\item The term $\Delta(\widehat \K - \K)  (\widetilde w_{S_j} - w)$ is negligible for $S \supseteq S^\star$, as both $\widehat \K - \K$ and $\widetilde w_{S_j} - w$ tend to zero independently. We emphasize that this argument no longer holds without the independence of $\widetilde w_{S_j} $ and $\widehat \K$. This is precisely why we use a training sample.
\end{itemize}

\noindent
Thus, the sequence $ j\mapsto \Crit(\widetilde \W_{S_{j}},\widehat{\K})  = \Vert \Delta(\widehat \K) \widetilde w_{S_j} \Vert/\Vert \widetilde w_{S_j}\Vert $ is expected to achieve its minimum for the best estimator $\widetilde w_{S_j}$ in the trajectory, that is for $S_j = S^\star$. Furthermore, beyond the true support (for small active supports), $\widetilde w_{S_j}$ is not a consistent estimator of $w$ so that the criterion~no longer approaches zero, resulting in the so-called commutation gap. 

The ``reversed'' monotonicity provides an easy way to calibrate the threshold in the backward algorithm. Indeed, since $S_j \mapsto \Delta(\widehat \K) \widetilde w_{S_j} $ is expected to decrease when approaching the true support (coming from larger active supports along a trajectory), the estimated support can be heuristically chosen has the last time the criterion is below an adaptive threshold, see Figure~\ref{fig:Errorboot}. In particular, $\Crit(\widetilde \W_{S_{1}},\widehat{\K})$ can be used as an adaptive threshold for the backward algorithm when the estimator $\widehat \K$ and the trajectory $S_1 \supset \cdots \supset S_\ell$ are obtained from independent samples.

Of course, to afford splitting the sample to build the $\widetilde \W_{S_j}$ independent from $\widehat \K$ may be unrealistic. Nevertheless, the numerical study suggests that the independence is well mimicked when~$\widehat{\mathsf K}$ is built from the whole dataset but the backward algorithm sequence $\widetilde \W_{S_1},...,\widetilde \W_{S_\ell}$ is obtained from a learning sub-sample, as illustrated in Figure \ref{fig:Errorboot}. {Empirically, the optimal size of training samples could be calibrated in function of the number of observations using the robustness of the outputs of the algorithm. In this paper, we always draw training samples by taking each observation with probability $1/2$, with no consideration regarding the size of the whole sample.}\vspace{-0.4cm}

\begin{figure}[h]
 \centering
 \includegraphics[scale = 0.4]{./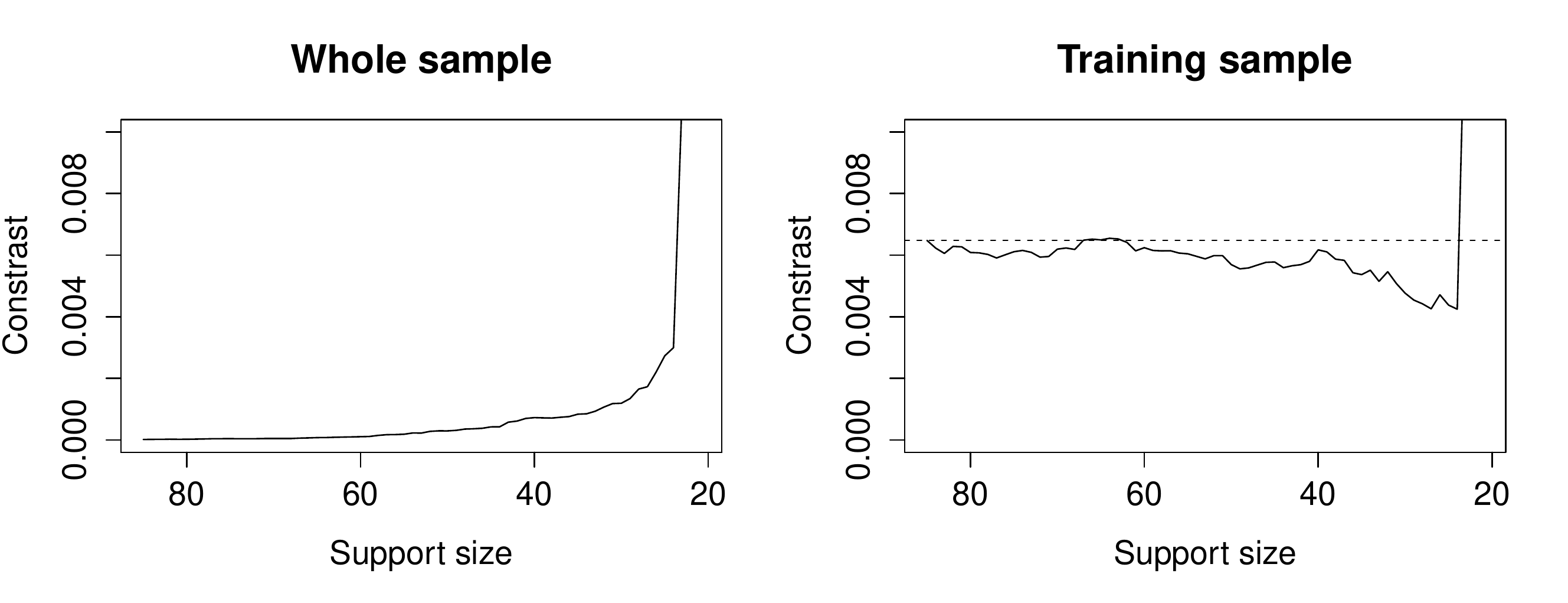}
 \caption{The contrast sequence $j \mapsto \Crit(\widetilde \W_{S_{j}},\widehat{\K}) $ computed in the example of Section \ref{sec:simu2}. The nested support sequence and estimators $\widetilde \W_{S_{j}}$ are obtained from the backward algorithm implemented on the whole sample (left) and on a training sample of half size (right). In both cases, $\widehat{\mathsf K}$ is constructed from the whole sample. Using a training sample manages to reverse the monotonicity in the first part of the sequence, thus making the commutation gap easier to locate. The initial value of the sequence $t=\Crit(\widetilde \W_{S_{1}},\widehat{\K}) $ then provides a tractable adaptive choice for the threshold.}
 \label{fig:Errorboot}
\end{figure}

\subsection{Improving the backward algorithm by Bagging}
\label{subsec:graphtri}
The main weakness of the backward procedure remains that it requires the true support $S^\star$ to lie in the trajectory $S_1 \supset \cdots \supset S_\ell$ obtained from removing the least significant edge one at a time. In practice, this condition is rarely verified, especially with small datasets. A way to solve this issue is to replicate the backward algorithm over a collection of random sub-samples, a process commonly known to as \textit{bagging}\textemdash  Bootstrap Aggregation. The description of this algorithm is given in Algorithm~\ref{alg:Backward}.

\begin{algorithm}[h]
\caption{Bagging backward algorithm}
  \SetAlgoLined
  \KwData{A set of forbidden entries $F$, a sample $X$.}
  \KwResult{A collection of estimated supports $\widehat S_m, m =1,...,M$.}
    \BlankLine
    \begin{algorithmic}[1]
  	\STATE Build $M$ bootstrapped samples without replacement.
		\STATE For each sub-sample $m=1,...,M$, build an estimator $\widetilde \K_m$ of $\K$.
  	\STATE For all $m$, run Algorithm \ref{alg:BackwardSupportSelection} without stopping condition and return $M$ trajectories $S_{1m} \supset \cdots \supset S_{\ell m}$ and the corresponding estimators $\widetilde \W_{S_{km}}$.
		\STATE Evaluate the empirical contrast $\Crit(\widetilde \W_{S_{km}},\widehat{\K})$ over each trajectory with the estimator $\widehat \K$ calculated from the whole sample.
		\STATE For each trajectory, return the estimated support $\widehat S_m : = S_{\hat k_m m}$ as the last support whose contrast lies below the initial value:	
		$$ \hat k_m := \max \big\{k =1,...,\ell : \Crit(\widetilde \W_{S_{km}},\widehat{\K}) \leq \Crit(\widetilde \W_{S_{1m}},\widehat{\K}) \big\}.		\vspace{-0.2cm} $$		
\end{algorithmic}
\label{alg:Backward}
\end{algorithm}

The bagging algorithm produces a collection of estimated supports in a way to make the final decision more robust. At this point, several solutions are possible: select the most represented support among the $\widehat S_m$'s, keep the edges present in the most supports etc... A preliminary detection of the outliers among the $\widehat S_m$'s, e.g. by removing beforehand the supports $\widehat S_m$'s that are either too big or too small, might also considerably improve the method, as we illustrate on actual examples in Section~\ref{sec:num}.

\section{Numerical study}\label{sec:num}
\subsection{Toy example}\label{subsec:toy}
In the previous section, we have introduced different algorithms. To emphasize the motivation of the bagging algorithm, we consider a simple example, and implement the different algorithms for support recovery. To check the performances of the $\ell_0$ procedure, we need to consider a graph with a small number of vertices (since the $\ell_0$ complexity grows with $2^{N(N-1)/2}$ where $N$ denotes the number of vertices). Here, we consider the graph $G_1$ represented in Figure~\ref{fig:kite5}, the kite graph on $5$ vertices.

\begin{figure}[H]
\center
\begin{tikzpicture}[line width =0.3mm,scale=0.6]
\vertex[circle,  minimum size=10pt](1) at  (0,0){} ;
\vertex[circle,  minimum size=10pt](2) at  (2.6,0) {};
\vertex[circle,  minimum size=10pt](5) at (5.2,0){};
\vertex[circle,  minimum size=10pt](6) at (7.8,1.3){};
\vertex[circle,  minimum size=10pt](7) at (7.8,-1.3){};

\draw (1)--(2)--(5)--(6)--(7)--(5) ;
\end{tikzpicture}
\caption{The kite graph $G_1= \nabla_5$.}
\label{fig:kite5}
\end{figure}
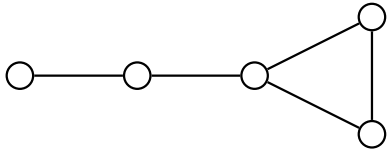

We choose $\W$ as the (normalized) adjacency matrix of $G_1$ then draw a sample of size $n = 500$ of centered Gaussian vectors $X_1, \cdots X_{n}$ of $\R^5$ with covariance matrix $\K = \exp(\W)$. We assume known that $G_1$ contains no self-loop so that we take $F = F_{diag}$ as the set of forbidden values. {In this simple example, the constant $c_0(S^\star)$ (see Eq.\! \eqref{eq:def_c0}) can be calculated explicitly, yielding $c_0(S^\star) \approx 0.12$. In comparison,  for $n=500$, $\mathbb E \Vert \widehat \K - \K \Vert$ is evaluated to approximately $0.27$ by Monte-Carlo. We expect to be able to recover the true support when the noise level drops below the signal amplitude. Based on the bound of Eq.\! \eqref{eq:threshold}, this occurs  as soon as $\Vert \widehat \K - \K \Vert \leq c_0(S^\star)/2$ however, because this bound is not sharp, a lesser level of precision is required in practice.}\\

\noindent We compare the following algorithms:
\begin{enumerate}
 \item \textit{Contrast penalized $\ell_0$ minimization with optimal penalization constant.} We compute
$$  \widehat S = \arg \min_{S \subseteq \overline F_{diag}} \bigg\{ \min_{\mathsf{A}\in\mathcal E(S) \setminus \{0 \}} \frac{\Vert  \mathsf{A} \widehat{\mathsf{K}}-\widehat{\mathsf{K}} \mathsf{A} \Vert}{\lVert \mathsf{A}\lVert} + \lambda \vert S \vert \bigg\}. $$
The constant $\lambda$ is chosen as  the best possible value, minimizing the {oracle} error $\delta(\widehat S) $ measured by the symmetric difference between $\widehat S$ and $S^\star$: $\delta(\widehat S) = |  \widehat S \cup S^\star \setminus \widehat S \cap S^\star |$. Because the calibration parameter $\lambda$ is chosen optimally for each realization of $\widehat \K$, the numerical performances of the method can be expected to be overestimated compared to a fully data-driven procedure. 
 \item \textit{Thresholding contrast minimization with optimal threshold. }The target matrix $\W$ is estimated over the maximal acceptable support $\overline F_{diag}$. We then compute 
$$ \widehat S = \{ (i,j):  | \widehat \W_{ij} | > t \}, $$
where the threshold $t$ is chosen so as to minimize the {oracle} error $\delta(\widehat S)$ for each realization of $\widehat \K$.
 \item \textit{Backward algorithm.}  We generate a training sample by taking each observation with probability $1/2$ independently, from the whole sample. The estimator of $\K$ in this sub-sample is denoted $\widetilde \K$. We implement  Algorithm \ref{alg:BackwardSupportSelection} on $\widetilde \K$, yielding a trajectory $S_{1} \supset ...  \supset S_{\ell }$ of nested supports whose sizes vary from $|S_{1}| = 20$ (the  full off-diagonal support) to $|S_{\ell}| = 12$ (the minimal size required for diagonal identifiability), along with the associated estimators $\widetilde \W_{S_{k}}, k=1,...,\ell$. Remark that because the supports are symmetric, two entries are removed at each step so that $\ell = 5$ in this case. We then compute the threshold $t = \Crit(\widetilde \W_{S_{1}},\widehat{\K}) $ corresponding to the initial value of the contrast. The estimated support~$\widehat S$ is defined as the smallest support $S$ in the trajectory such that $\Crit(\widehat \W_{S},\widehat{\K})  \leq t $.

 \item \textit{Bagging backward algorithm.} The previous algorithm is implemented over $M=100$ training samples drawn keeping observations with probability $1/2$. For each $m=1,...,M$, we retain
\begin{itemize}
	\item[-] the threshold $t_m = \Crit(\widehat \W_{S_{1m}},\widehat{\K}) $ corresponding to the initial value of the contrast,
	\item[-] the estimated support, that is, the smallest support $\widehat S_m$ in the trajectory such that $\Crit(\widehat \W_{\widehat S_m},\widehat{\K})  \leq t_m $.
\end{itemize}
The final decision $\widehat S$ is obtained as follows. Only a proportion $q$ of the training samples $m$ with a small initial contrast $t_m$, which are expected to provide more accurate results, are kept (in the whole paper, we chose $q = 2/\sqrt{M}$ empirically). Then, the smallest support among the remaining candidates is retained, choosing one at random if it is not unique.
\end{enumerate}

\begin{remark} 
We can view our problem as a linear regression such that the observation that we aim at regressing is null, $y=0$ and the design operator $\A\mapsto\widehat{\K}\A-\A\widehat{\K}$ is noisy. Our goal is to find a solution in the kernel of the operator $\A\mapsto{\K}\A-\A{\K}$. In this context, a Lasso procedure $(${\it i.e.}, minimizing $\|\widehat{\K}\A-\A\widehat{\K}\|^2+\lambda \| \A\|_1)$ without further constraints leads to the null matrix solution.  Therefore, we need to add a condition to avoid the null solution $\widehat\W=0$. Since we can only recover the target up to a scaling parameter, we should consider for instance that $\|\W\|=1$ and adding the constraint $\|\A\|=1$. It results in a non-convex program with no guarantees that a local minimum is the solution to the program. 

Recall that we aim at recovering the exact support when the number of observations is large. But using the $\ell_1$ penalty tends to overestimate the support and any conservative choice of $\lambda$ will lead to false positives in the estimated support. Furthermore, it can be understood that a full matrix may commute with $\widehat{\K}$, and at the same time it may have a small $\ell_1$ norm. That is to say that there may be no restricted eigenvalue condition for the noisy design operator in our framework. 

Hence, when aiming for support recovery, the typical solution is to vanish the small entries of $\widehat{\W}$, making it no more efficient than the thresholded $\ell_2$ procedure considered in Algorithm $2$. For this reason, the numerical performances of the Lasso procedure are not included in the study.
 \end{remark}

The next table compares the performances of the four algorithms. We calculated the Monte-Carlo estimated mean error $\mathbb E ( \delta(\widehat S)) $ and probability of exact recovery $\mathbb P\{\widehat S = S^\star\}$ for $1000$ repetitions of the experiment. The average computational time (obtained with the function \verb|timer| of Scilab) on a processor Intel Xeon $@2.6\,\text{GHz}$ are shown, using {the oracle} values of $\lambda$ and~$t$ for the first two algorithms (the calibration of these parameters is thus not accounted for in the computation time). 

\vspace{0.3cm}
\begin{tabular}{|c|c|c|c|c|}
\hline
 Algorithm 	& $\ell_0$ & $\ell_2 -$thresholding & Backward & Bagging Backward 
 \\ \hline Mean Error 		 & 0.45 	  &0.37       & 	1.95 & 0.68 	                                  
 \\ \hline Exact recoverery &    68\%      &  75\%        & 23\%          & 61\% 	
  \\ \hline CPU  time (s) &    0.32      &  0.002       & 0.009          &0.59 	\\ \hline
 \end{tabular}
\vspace{0.3cm}

In this example, the first two algorithms are the more accurate. The percentage of successful recoveries for the bagging backward algorithm is nonetheless competitive given that the first two procedures have been calibrated optimally for each experiment, which would be highly infeasible in practice. Finally, we observe that although it is much more expensive computationally, the bagging version of the backward algorithm yields an undeniable improvement.\\

Upper bounds for the time and space complexity of the algorithms are given in the next table. The time complexity is calculated as the number of different supports $S$ considered to lead to the solution in function of the size $N$ of the graph and the number $M$ of training samples. The spatial complexity measures the memory size needed to compute the solution. In this setting, it is the main limitation for applying the procedures to large graphs. The $N^4$ comes from the computation of $\Delta(\K) = \K\otimes \id - \id \otimes \K$ in the solver. Admittedly, the complexity could be improved by using sparse matrix encoding although this was not implemented.

 \vspace{0.3cm}
 \begin{tabular}{|c|c|c|c|c|}
 \hline
  Algorithm  & $\ell_0$            & $\ell_2-$thresholding & Backward & Bagging Backward   \\ \hline
 Space Complexity  &$O(N^4)$            & $O(N^4)$   & $O(N^4)$     &  $O(N^4)$    	  \\ \hline
Time Complexity&$O(2^{N(N-1)/2})$&   $O(1)$      &  $O(N^2)$          & $O(N^2.M)$                     \\ \hline
  \end{tabular}
	\vspace{0.3cm}

On the current version, the bagging backward algorithm contains scalability issues for big graphs due to its space complexity. Leads to reduce the spatial complexity include using sparse matrix encoding or the use of cheap approximations of the criterion. These shall be investigated in future works. 

\subsection{A diagonally identifiable matrix}\label{sec:simu2}

The advantages of the bagging backward algorithm are highlighted for larger graphs. In the next example, we consider the graph $G_2$ on $N=15$ vertices represented in Figure  \ref{fig:Graphetri}. The experimental conditions are similar to that of the previous example, a sample of size $n=10000$ is drawn from a centered Gaussian vector of variance $\K = \exp(\W)$ where $\W$ is the normalized adjacency matrix of $G_2$, with normalizing constant chosen such that $\mathbf 1^\top w = 1$. The implementation of the different algorithms follow the description of the previous example.

\begin{figure}[h!]
\center
\begin{tikzpicture}[line width =0.3mm,scale=0.60]
\vertex[circle,  minimum size=10pt](1) at  (0,0){} ;
\vertex[circle,  minimum size=10pt](2) at  (2,0) {};
\vertex[circle,  minimum size=10pt](3) at  (4,0){} ;
\vertex[circle,  minimum size=10pt](4) at  (6,0) {};
\vertex[circle,  minimum size=10pt](5) at (8,0){};
\vertex[circle,  minimum size=10pt](6) at (1,-1.5){};
\vertex[circle,  minimum size=10pt](7) at (3,-1.5){};
\vertex[circle,  minimum size=10pt](8) at (5,-1.5){};
\vertex[circle,  minimum size=10pt](9) at (7,-1.5){};
\vertex[circle,  minimum size=10pt](10) at (2,-3){};
\vertex[circle,  minimum size=10pt](11) at (4,-3){};
\vertex[circle,  minimum size=10pt](12) at (6,-3){};
\vertex[circle,  minimum size=10pt](13) at (3,-4.5){};
\vertex[circle,  minimum size=10pt](14) at (5,-4.5){};
\vertex[circle,  minimum size=10pt](15) at (4,-6){};

\draw (1)--(2)--(3)--(4)--(5)--(9)--(8)--(7)--(6)--(1) (9)--(12)--(14)--(15)--(13)--(10)--(11)--(12) ;
\draw (9)--(4) (6)--(10)  (12)--(8)--(3)--(11)--(7)--(2)  (13)--(14)--(11);
\end{tikzpicture}
\caption{The graph $G_2$ is diagonally identifiable.}
 \label{fig:Graphetri}
\end{figure}
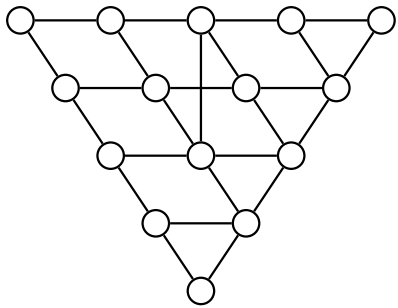

In this case, the number of possible supports is too large for the $\ell_0$ method to be implementable while the accuracy of the thresholded $\ell_2$ drops considerably compared to smaller cases. We summarize the results in the following table. 
 
 \vspace{0.3cm}
 {\begin{center}
 \begin{tabular}{|c|c|c|c|c|}
\hline
 Algorithm	 & $\ell_2 -$thresholding & Backward & Bagging Backward
 \\ \hline Mean Error 		  &      10  &  25         & 1	                                  
 \\ \hline Exact recoverery  &  22\%       &26\%        & 69\%	
  \\ \hline CPU  time   (s)          & 0.04      &  2.5        & 256	\\ \hline
 \end{tabular}
 \end{center}}
 \vspace{0.3cm}

A drawback of the bagging backward algorithm is the larger computational time: it takes around 4 minutes in average to estimate the support. Being essentially $M=100$ repetitions of the backward algorithm, the numerical complexity of the bagging version is roughly $M$ times that of the simple backward algorithm, although the improvement is, here again, clear.

 To illustrate the influence of the unknown function $f$, we consider $f: t \mapsto  (1-t)^{-2}$ and reproduce the numerical study for $\K=f(\W)$. The results for various sample sizes are gathered in the next table, for $M=100$ {bagging runs}. 

$$
\begin{array}{|c|c|c|c|c|c|c|c|}
\hline
n &10000&5000&2000&1000
\\ 
\hline
\text{Exact recovery} & 97\%  & 87\%   & 83\%  & 13\% \\
\hline 
\text{Mean error} &   0.05  & 0.33   & 0.9  & 8.5  \\
\hline 
\end{array} $$

The probability of recovering the true support appears to be greater than in the previous example ($97\%$ against $69\%$ previously for $n=10000$). This sheds lights on another important factor in the efficiency of the methods which is the separability of the spectrum of $\K$. Indeed, in this framework, the information needed to recover $\W$ lies in its eigenspaces, which are estimated via $\widehat \K$. The accuracy of these estimates depends on the distance between the different eigenvalues (see e.g. Corollary~4.12 in \cite{stewart1990matrix} and the Wedin'\! $\sin(\theta)$ theorem in \cite{stewart1990matrix}). Thus, for $\lambda_1,...,\lambda_N$ the spectrum of $\W$, the ability to recover $\W$ from $\K=f(\W)$ is strongly impacted by the distances $ | f(\lambda_i) - f(\lambda_j)|, i,j=1,...,N$. In this situation where $f$ is the exponential function, $\W$ having few negative eigenvalues will thus have a positive impact on the estimation. For the sake of comparison, the spectrum of $\W$, given by $\{- 0.45,- 0.28 ,- 0.26 ,- 0.21, - 0.18 ,- 0.16 ,- 0.08 ,- 0.01 , 0.03,0.10  , 0.14  , 0.19,   0.31 ,  0.34,   0.52\}$, is more ``spread'' by the function $t \mapsto (1-t)^{-2}$ than by the exponential, see Figure \ref{fig:f}.

\begin{figure}[H]
	\centering
		\includegraphics[width=0.72\textwidth]{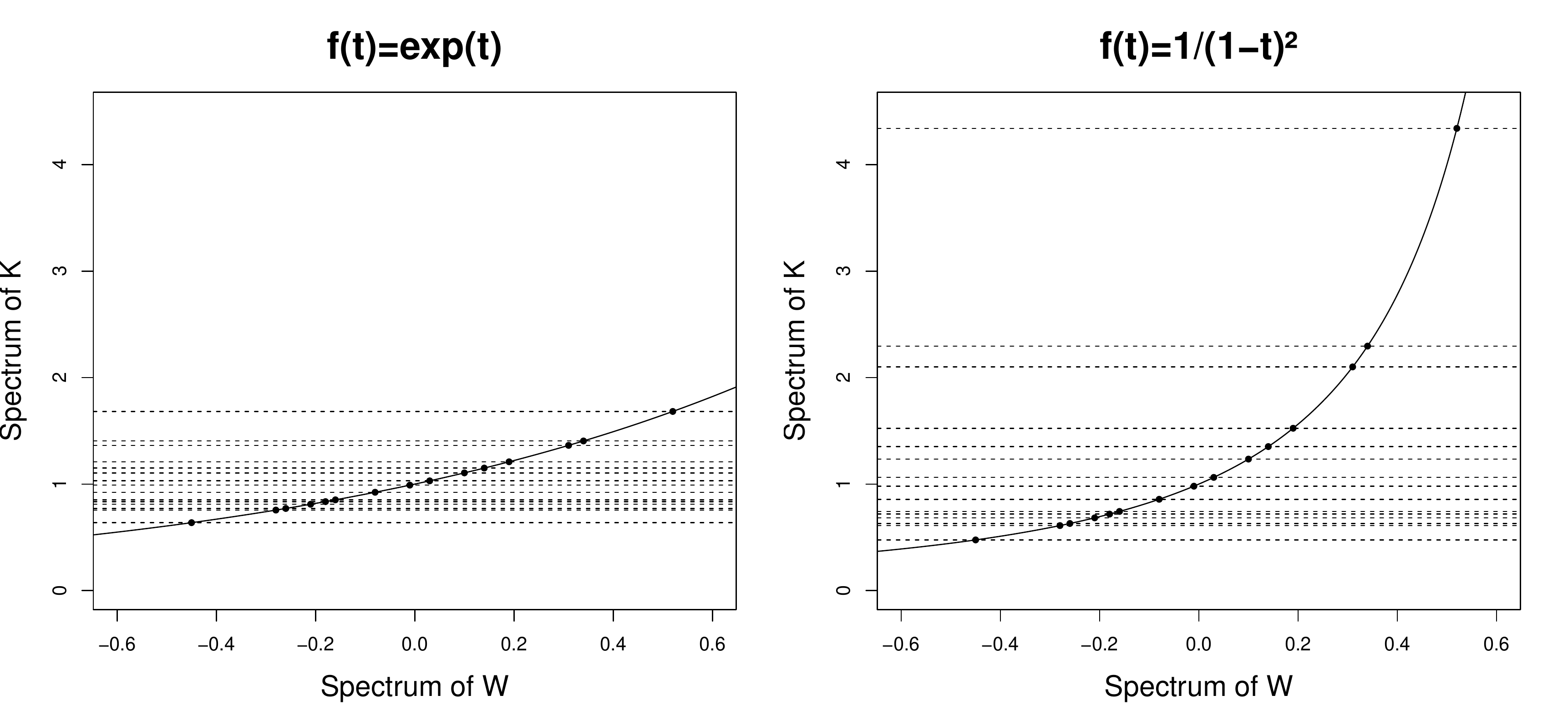}
		\caption{Separability of the spectrum of $\K=f(\W)$ for $f: t \mapsto \exp(t)$ (left) and $f: t \mapsto (1-t)^{-2}$ (right). The eigenvalues of $\K$ are more separated in the second case, making it easier to approximate its eigenspaces from the estimator $\widehat \K$.}
	\label{fig:f}
\end{figure}

\begin{remark} We also implemented the procedure in a random setting where $\W$ is drawn from an Erd\"os-R\'enyi graph with binomial entries. The conclusions obtained in this case are similar to those already discussed and shall not be presented to avoid redundancy.
\end{remark}

\section{Real life application}
\label{sec:reallife}
We now implement the bagging backward algorithm on real life data provided by M\'et\'eorage and M\'et\'eo France. The data contain the daily number of lightnings during a $3$ year period in~$16$ regions of France localized on a $4\times 4$ grid. We expect to recover the spatial structure of the graph from the dependence of the lightning occurrences between the regions.


The data are refined as follows. We first eliminate day without any lighting all over France and we obtain some observations $X_{i}$, $i = 1, \ldots, 950$, where $X_i$ is a vector of length $16$ giving the number of impacts at day $i$ in each of the $16$ regions. This numbers are highly non Gaussian, contain many zeros, and show a clear south-east/north-west tendency (with much more lightning in the south east). Therefore, we look at the numbers at the log scale (taking $\log(X_i+1)$, with~$+1$ dealing with vanishing values $X_i=0$) and we subtracted the spatial tendency (this operation replaces vanishing values by small residues after regression). Now, it remains a strong inhomogeneity, that should violate the assumption that the underlying graph has no self-loops ({\it i.e.},~the diagonal of $\W$ is zero). To overcome this problem, we normalize the process in such manner that the conditional variance at each vertex conditionally to all the other is $1$. 

We model the resulting process as a spatial AutoRegressive Gaussian fields, as described in Section \ref{sec:SpatialAR}. Given that the covariance matrix of the process commutes with the underlying graph, we applied our algorithm: we draw $100$ learning samples, keeping or not each observation with probability $1/2$, and retained $20\%$ of the $100$ trajectories. We do not obtain exactly the same graph at each run, although the graphs are most of the time very satisfying. To show the results, we ran $100$ times the algorithm and kept, for every edge, the proportion of the time that this edge appears. This is summarized in Figure \ref{fig:prop}.

\begin{figure}[H]
\center
\begin{tikzpicture}[line width =0.3mm,scale=0.5]
\vertex[circle,  minimum size=7pt](11) at  (0,0){} ;
\vertex[circle,  minimum size=7pt](12) at  (2,0) {};
\vertex[circle,  minimum size=7pt](13) at  (4,0){} ;
\vertex[circle,  minimum size=7pt](14) at  (6,0) {};
\vertex[circle,  minimum size=7pt](21) at  (0,-2){} ;
\vertex[circle,  minimum size=7pt](22) at  (2,-2) {};
\vertex[circle,  minimum size=7pt](23) at  (4,-2){} ;
\vertex[circle,  minimum size=7pt](24) at  (6,-2) {};
\vertex[circle,  minimum size=7pt](31) at  (0,-4){} ;
\vertex[circle,  minimum size=7pt](32) at  (2,-4) {};
\vertex[circle,  minimum size=7pt](33) at  (4,-4){} ;
\vertex[circle,  minimum size=7pt](34) at  (6,-4) {};
\vertex[circle,  minimum size=7pt](41) at  (0,-6){} ;
\vertex[circle,  minimum size=7pt](42) at  (2,-6) {};
\vertex[circle,  minimum size=7pt](43) at  (4,-6){} ;
\vertex[circle,  minimum size=7pt](44) at  (6,-6) {};

\draw (11)--(12)--(13)--(14)--(24) (23)--(22)--(21)--(11) (22)--(12) (31)--(21) (32)--(33)--(34)--(24) (33)--(23)--(22)--(32)--(31)--(22)
(31)--(41)--(42)--(43) (44)--(34) (42)--(32) (44)--(43)--(33)  (41) (14) (43) (23)--(13) (22)--(13) (31)--(12)  (33)--(24)--(41)--(22) (41)--(23) (22)--(13) 
(13)--(22) (23)--(24);

\begin{scope}[xshift=10cm]
\vertex[circle,  minimum size=7pt](11) at  (0,0){} ;
\vertex[circle,  minimum size=7pt](12) at  (2,0) {};
\vertex[circle,  minimum size=7pt](13) at  (4,0){} ;
\vertex[circle,  minimum size=7pt](14) at  (6,0) {};
\vertex[circle,  minimum size=7pt](21) at  (0,-2){} ;
\vertex[circle,  minimum size=7pt](22) at  (2,-2) {};
\vertex[circle,  minimum size=7pt](23) at  (4,-2){} ;
\vertex[circle,  minimum size=7pt](24) at  (6,-2) {};
\vertex[circle,  minimum size=7pt](31) at  (0,-4){} ;
\vertex[circle,  minimum size=7pt](32) at  (2,-4) {};
\vertex[circle,  minimum size=7pt](33) at  (4,-4){} ;
\vertex[circle,  minimum size=7pt](34) at  (6,-4) {};
\vertex[circle,  minimum size=7pt](41) at  (0,-6){} ;
\vertex[circle,  minimum size=7pt](42) at  (2,-6) {};
\vertex[circle,  minimum size=7pt](43) at  (4,-6){} ;
\vertex[circle,  minimum size=7pt](44) at  (6,-6) {};

\draw (11)--(12)--(13)--(14)--(24) (23)--(22)--(21)--(11) (22)--(12) (31)--(21) (32)--(33)--(34)--(24) (33)--(23)--(22)--(32)--(31) (22)--(13)
(31)--(41)--(42)--(43) (44)--(34) (42)--(32) (43)--(33)  (41) (14) (43) (23)--(13) ;
\end{scope}

\begin{scope}[xshift=20cm]
\vertex[circle,  minimum size=7pt](11) at  (0,0){} ;
\vertex[circle,  minimum size=7pt](12) at  (2,0) {};
\vertex[circle,  minimum size=7pt](13) at  (4,0){} ;
\vertex[circle,  minimum size=7pt](14) at  (6,0) {};
\vertex[circle,  minimum size=7pt](21) at  (0,-2){} ;
\vertex[circle,  minimum size=7pt](22) at  (2,-2) {};
\vertex[circle,  minimum size=7pt](23) at  (4,-2){} ;
\vertex[circle,  minimum size=7pt](24) at  (6,-2) {};
\vertex[circle,  minimum size=7pt](31) at  (0,-4){} ;
\vertex[circle,  minimum size=7pt](32) at  (2,-4) {};
\vertex[circle,  minimum size=7pt](33) at  (4,-4){} ;
\vertex[circle,  minimum size=7pt](34) at  (6,-4) {};
\vertex[circle,  minimum size=7pt](41) at  (0,-6){} ;
\vertex[circle,  minimum size=7pt](42) at  (2,-6) {};
\vertex[circle,  minimum size=7pt](43) at  (4,-6){} ;
\vertex[circle,  minimum size=7pt](44) at  (6,-6) {};

\draw (11)--(21)--(31)--(41) (42)--(32)--(22)--(12)--(13)--(23)--(33)--(43) (13)--(14)--(24)--(34)--(44);
\end{scope}
\end{tikzpicture}
\caption{Edges that appears in $30\%$, $50\%$, and $70\%$ of the time in the bagging backward algorithm.}
 \label{fig:prop}
\end{figure}
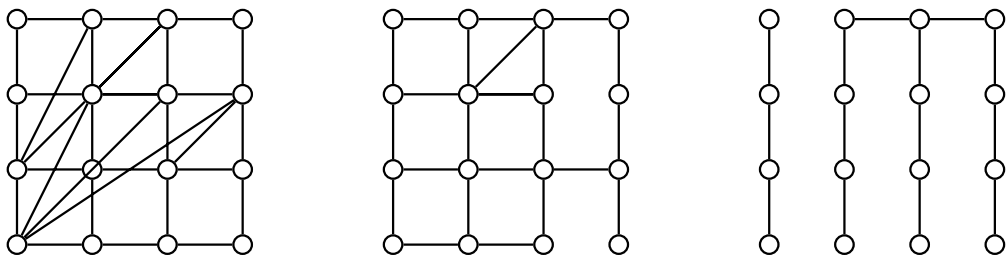

To compare the performance of our method, we used the package \texttt{GGMselect} to infer Graphical Models, see \cite{giraud2012graph}. This package is very efficient, and powerful even for samples with more vertices than observations. It is not designed exactly for our case, so we do not pretend that our method makes better than this algorithm. Furthermore, we did not tune the parameters, and used rather the default parameters, only specifying the maximal degree of each vertex as $\texttt{dmax} = 5$ and the family \texttt{CO1}. The results are given in Figure~\ref{fig:resultggm}.

We also implemented \texttt{GGMselect} on learning sample obtained keeping observation with probability $1/2$ (as for the bagging backward algorihm), and represent how often an edge appeared, as in our method. We have to note that \texttt{GGMselect} seems more robust than our method, and this fact holds also for the other family \texttt{LA} even if we will not present here the quite similar results. Furthermore, our algorithm takes a lot of time compared to \texttt{GGMselect} ($0.3 s.$ for \texttt{GGMselect} and $400 s.$ for our algorithm.)

Nevertheless, the two methods give different results. The normalized lightning fields happens to be closed to a Simultaneous AutoRegressive process of order $k$ on $\mathbb Z^2$ (see for instance \cite{guyon1995random} and \cite{gaetan2010spatial}). Hence, we expect that the target graph to be alike $\mathbb Z^2$. In Figure~\ref{fig:prop}, we observe that the $50\%$ present edges graph seems to uncover this spatial dependency keeping only edges between adjacent regions. Note that, the package \texttt{GGMselect} aims to recover a weighted graph of the paths of length at most $k$ on the grid while our method aims to recover the graph itself.

\begin{figure}[h]
 \centering

\begin{tikzpicture}[line width =0.3mm,scale=0.5]
\vertex[circle,  minimum size=7pt](11) at  (0,0){} ;
\vertex[circle,  minimum size=7pt](12) at  (2,0) {};
\vertex[circle,  minimum size=7pt](13) at  (4,0){} ;
\vertex[circle,  minimum size=7pt](14) at  (6,0) {};
\vertex[circle,  minimum size=7pt](21) at  (0,-2){} ;
\vertex[circle,  minimum size=7pt](22) at  (2,-2) {};
\vertex[circle,  minimum size=7pt](23) at  (4,-2){} ;
\vertex[circle,  minimum size=7pt](24) at  (6,-2) {};
\vertex[circle,  minimum size=7pt](31) at  (0,-4){} ;
\vertex[circle,  minimum size=7pt](32) at  (2,-4) {};
\vertex[circle,  minimum size=7pt](33) at  (4,-4){} ;
\vertex[circle,  minimum size=7pt](34) at  (6,-4) {};
\vertex[circle,  minimum size=7pt](41) at  (0,-6){} ;
\vertex[circle,  minimum size=7pt](42) at  (2,-6) {};
\vertex[circle,  minimum size=7pt](43) at  (4,-6){} ;
\vertex[circle,  minimum size=7pt](44) at  (6,-6) {};

\draw (11)--(13)--(23)--(43)--(11)--(31)--(41)--(42)--(24)--(14)--(13)--(21)--(12) (11)--(44)--(32)--(24)--(42)--(34)--(32) (12)--(41)--(13)
(31)--(14) (23)--(41)--(24) (43)--(24) (23)--(44);

\begin{scope}[xshift=10cm]
\vertex[circle,  minimum size=7pt](11) at  (0,0){} ;
\vertex[circle,  minimum size=7pt](12) at  (2,0) {};
\vertex[circle,  minimum size=7pt](13) at  (4,0){} ;
\vertex[circle,  minimum size=7pt](14) at  (6,0) {};
\vertex[circle,  minimum size=7pt](21) at  (0,-2){} ;
\vertex[circle,  minimum size=7pt](22) at  (2,-2) {};
\vertex[circle,  minimum size=7pt](23) at  (4,-2){} ;
\vertex[circle,  minimum size=7pt](24) at  (6,-2) {};
\vertex[circle,  minimum size=7pt](31) at  (0,-4){} ;
\vertex[circle,  minimum size=7pt](32) at  (2,-4) {};
\vertex[circle,  minimum size=7pt](33) at  (4,-4){} ;
\vertex[circle,  minimum size=7pt](34) at  (6,-4) {};
\vertex[circle,  minimum size=7pt](41) at  (0,-6){} ;
\vertex[circle,  minimum size=7pt](42) at  (2,-6) {};
\vertex[circle,  minimum size=7pt](43) at  (4,-6){} ;
\vertex[circle,  minimum size=7pt](44) at  (6,-6) {};

\draw (11)--(13)--(23)--(43)--(11)--(31)--(41)--(42)--(24)--(14)--(13)--(21)--(12) (11)--(44)--(32)--(24)--(42)--(34)--(32) (12)--(41)--(13);
\end{scope}

\begin{scope}[xshift=20cm]
\vertex[circle,  minimum size=7pt](11) at  (0,0){} ;
\vertex[circle,  minimum size=7pt](12) at  (2,0) {};
\vertex[circle,  minimum size=7pt](13) at  (4,0){} ;
\vertex[circle,  minimum size=7pt](14) at  (6,0) {};
\vertex[circle,  minimum size=7pt](21) at  (0,-2){} ;
\vertex[circle,  minimum size=7pt](22) at  (2,-2) {};
\vertex[circle,  minimum size=7pt](23) at  (4,-2){} ;
\vertex[circle,  minimum size=7pt](24) at  (6,-2) {};
\vertex[circle,  minimum size=7pt](31) at  (0,-4){} ;
\vertex[circle,  minimum size=7pt](32) at  (2,-4) {};
\vertex[circle,  minimum size=7pt](33) at  (4,-4){} ;
\vertex[circle,  minimum size=7pt](34) at  (6,-4) {};
\vertex[circle,  minimum size=7pt](41) at  (0,-6){} ;
\vertex[circle,  minimum size=7pt](42) at  (2,-6) {};
\vertex[circle,  minimum size=7pt](43) at  (4,-6){} ;
\vertex[circle,  minimum size=7pt](44) at  (6,-6) {};

\draw (11)--(13)--(23)--(43)--(11)--(31)--(41)--(42)--(24)--(14)--(13)--(21)--(12) (11)--(44)--(32)--(24)--(42)--(34)--(32);
\end{scope}
\end{tikzpicture}

 \caption{Results with the \texttt{GGMselect} package, with families \texttt{LA}, \texttt{CO1} and \texttt{QE}.}
 \label{fig:resultggm}
\end{figure}
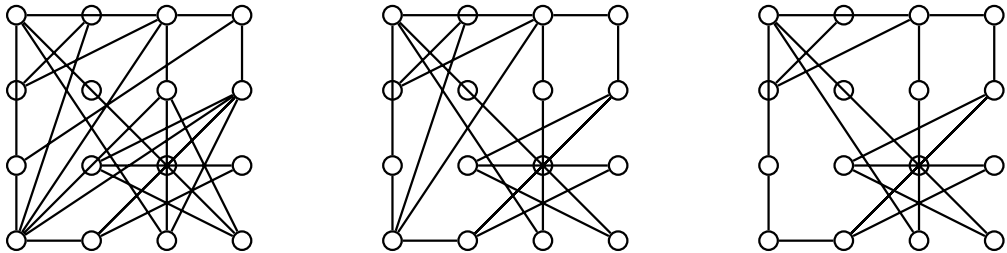

The results show that, in this case, and with the purpose of finding an underlying graph that ``generates'' the process, our method seems to work at least as fine as an inference of a graphical model, modeling data as a Gaussian Markov Field. We insist that we do not claim this fact to be general. In particular, we need much more observations than the methods developed in this package. But we pretend that, in different contexts, and with enough observations, we can be as good as other methods. Indeed, our method yet presents one advantage: the process does not need to be Markov, and for instance, we could infer spatial autoregressive process of any order (whereas graphical model inference can only recover underlying graphs for $\mathrm{AR}_1$ spatial processes, which are Markov). But this advantage turns into a problem when the process is truly Markov, because we do not use the knowledge of the function $f$, which can be taken as $1/x$ in the Markov case.

\section{Discussion}
\label{sec:discussion}

In this paper, we develop a new method to recover hidden graphical structures in different models that shares the fact that, one way or another, we have access to an approximation of the eigenstructure of the graph, through an estimation of an operator that commutes with a weighted adjacency matrix of this unknown graph. This is noticeable that we do not need any sparsity assumption to make the method work, and even with the large number of unknown parameters ($\K = f(\W)$, with the support, the function $f$, and the non-null entries of $\W$ are all unknown), we can perfectly recover the support when enough observations are available. We only assume that we know the location of some zeros. The most interesting case is when the known zeros are localized onto the diagonal, because it only means that the process is well normalized, in a sense, because all self-loops have same weights.

Note that there is a number of observations below which the algorithm always provides a wrong support. Furthermore, this fact can be observed in practice, because almost all learning samples will lead to different supports. This limit is intrinsic to our model and is a matter of balance between the sample noise $\Vert \widehat{\K} - \K \Vert$ and the signal strength. The noise is the estimation error of $\K$, and has order $1/{\sqrt{n}}$, whereas the signal is of order $c_0(S^\star)$, see \eqref{eq:def_c0} and~\eqref{eq:threshold}. 

Furthermore, the paper addresses the problem of exact support recovery, which is way harder than to provide an approximation of the support. The performances presented in this paper were computed with defaults parameters, but manual tuning seems to improve a little bit the results. In particular, drawing learning samples with probability $\frac12$ may cause overfitting, and for very large samples, we do not always get $100\%$ exact support recovery. This problem can be easily bypassed by either decreasing the size of learning samples, or increasing the thresholds. In the present version, $3$ parameters have been empirically chosen : the size of learning samples, the number of bagging trajectories, and the way we regroup the results of theses bagging trajectories. One challenge for future work is to justify theses choices with theoretical results.

For practical issues, there remain three other challenges that have to be bypassed. The first one concern the assumption about the symmetry of $\W$, that should be released for real practical interest. The second concerns the assumption that $\W$ has a null diagonal. It remains to find an effective way to normalize the process when this assumption does not hold (the normalization used in Section \ref{sec:reallife} assume an autoregressive structure). Finally, our algorithm is greedy when the size of the graphs increases, and for large graphs, it would be really interesting to find a way to compute a cheap version of the criterion, and to compute the significance of the variable.


\section*{Acknowledgement}

The authors would like to thank Dieter Mitsche for fruitful discussions. We would like to warmly thank M\'et\'eo France et M\'et\'eorage for providing us the data used in Section~\ref{sec:reallife}. We would like to thank the Universidad de la Habana (Cuba) and the Centro de Modelamiento Matematico (Chile) for their hospitality.


\appendix

\section{Asserting the Diagonal Identifiability}\label{s:disc}

\subsection{Necessary and sufficient conditions}
\label{sec:CNS}
In this section, we focus on the $F$-identifiability in the special case where the set of forbidden entries is the diagonal $F_{\operatorname{diag}}:=\{ (i,i): i \in [1,N] \}$. Recall that a support $S$ is $F_{\operatorname{diag}}$-identifiable, or simply diagonally identifiable (DI), if for almost every matrix $\mathsf{A} \in \mathcal E(S)$, 
$$ \mathsf{B} \mathsf{A} = \mathsf{A} \mathsf{B} \ , \ \operatorname{diag}(\mathsf{B}) = 0 \ , \ \mathsf B = \mathsf B^\top \  \Longrightarrow \ \mathsf{B} = \lambda\mathsf{A}    $$
for some $\lambda \in \mathbb R$. In other words, a support $S$ is diagonally identifiable if almost every symmetric matrix $\mathsf{A}$ with support in $S$ is uniquely determined, up to scaling, by its eigenspaces among symmetric matrices with zero diagonal. In this section, we provide both sufficient and necessary conditions on a support $S$ to ensure the $F_{\operatorname{diag}}$-identifiability. For this, we consider a simple undirected graph $G_S = ([1,N],S)$ on $N$ vertices with edge set $S$.
\begin{definition}[{Induced subgraph}] For $V \subseteq [1,N]$, the induced subgraph $G_S(V) = (V,S(V))$ is the graph on $V$ with edge set $S(V) = S \cap V^2$. 
\end{definition}
\begin{proposition}
\label{p:invert}
For all support $S\subseteq [1,N]^2$, the set of invertible matrices in ${\mathcal E(S)}$ is either empty or a dense open subset of $\mathcal E(S)$.
\end{proposition}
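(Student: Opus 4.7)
The plan is to use the fact that invertibility is a Zariski-open condition controlled by a single polynomial (the determinant), and to combine this with the fact that $\mathcal E(S)$ is a finite-dimensional real vector space.

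First, I would observe that $\mathcal E(S)$, the set of symmetric matrices with support in $S$, is a finite-dimensional linear subspace of $\mathbb R^{N\times N}$: a canonical basis is given by the matrices $E_{\{i,j\}}:=e_ie_j^\top+e_je_i^\top$ (or just $e_ie_i^\top$ if $i=j$) for each unordered pair $\{i,j\}$ whose ordered versions lie in $S$. Parametrizing a generic $\mathsf A\in\mathcal E(S)$ by its free coefficients $(x_{\{i,j\}})$ in this basis, the determinant
\[
P(x):=\det\Big(\sum_{\{i,j\}} x_{\{i,j\}} E_{\{i,j\}}\Big)
\]
is a polynomial in the variables $x_{\{i,j\}}$, since the determinant is polynomial in the entries of the matrix.

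Next, I would use the classical dichotomy for polynomials on $\mathbb R^d$: either $P$ is identically zero, or its zero set $\{P=0\}$ is a proper real algebraic subvariety, hence closed with empty interior (this follows for instance from the fact that a nonzero univariate polynomial has finitely many roots, applied along any line on which $P$ does not vanish identically). In the first case, no matrix in $\mathcal E(S)$ is invertible, so the set of invertible matrices is empty. In the second case, the set of invertible matrices in $\mathcal E(S)$ is exactly $\{P\neq 0\}$, which is the complement of a proper closed algebraic subset and is therefore a dense open subset of $\mathcal E(S)$ for the Euclidean topology.

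No real obstacle is expected: the argument is essentially the same one that underlies Proposition~\ref{p:id}, specialized to the determinant rather than to the analytic function detecting non-$F$-identifiability. The only mild care is to make sure the parametrization of $\mathcal E(S)$ is correct (in particular, to count each off-diagonal symmetric pair only once so that $P$ is genuinely a polynomial in the free parameters), and to note that the Euclidean and Zariski closures of a proper algebraic subvariety of $\mathbb R^d$ agree for the purpose of having empty interior.
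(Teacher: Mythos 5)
Your proposal is correct and matches the paper's argument, which simply notes that the determinant of $\mathsf A\in\mathcal E(S)$ is a polynomial in its free entries and invokes the standard dichotomy (identically zero, or a zero set that is closed with empty interior). You have merely spelled out the details that the paper leaves implicit.
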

The proof is straightforward when writing the determinant of $\mathsf{A} \in \mathcal E(S)$ as a polynomial in its entries. Observe that by this property, finding one invertible matrix $\mathsf{A}$ in $\mathcal E(S)$ guarantees that almost every matrix in $\mathcal E(S)$ is invertible. In this case, we say that the graph $G_S$ is invertible. Similarly, we say that $G_S$ is diagonally identifiable if $S$ is diagonally identifiable.

\begin{theorem}[Conditions for $F_{\operatorname{diag}}$-identifiability]
\label{t:iden}
Let $S\subseteq\overline F_{\operatorname{diag}}$\, and $G_S=([1,N],S)$. 
\begin{enumerate}	
	\item \textbf{Necessary condition:} If $S$ is diagonally identifiable then there exists a sequence of subsets $V_3,...,V_{N-1} \subset [1,N]$ such that $| V_k | = k$ and $G_S(V_k)$ is invertible for all $k=3,...,N-1$.
	\item \textbf{Sufficient condition:} If there exists a nested sequence $V_3 \subset ... \subset V_{N-1} \subset [1,N]$ with $\vert V_k \vert =k$ such that	$G_S(V_k)$ is invertible for all $k=3,...,N-1$, then $S$ is diagonally identifiable.
\end{enumerate}
\end{theorem}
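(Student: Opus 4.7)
The proof plan is to translate diagonal identifiability into an algebraic rank condition on the diagonals of powers of $\mathsf A$. For a generic $\mathsf A \in \mathcal E(S)$ the eigenvalues are simple, so $\Com(\mathsf A) = \spann\{I, \mathsf A, \ldots, \mathsf A^{N-1}\}$. Any symmetric $\mathsf B$ commuting with $\mathsf A$ then writes $\mathsf B = \sum_{k=0}^{N-1} c_k \mathsf A^k$, and the zero-diagonal constraint $\operatorname{diag}(\mathsf B) = 0$ becomes the linear system $M(\mathsf A)\,c = 0$ with $M(\mathsf A)_{ik} = (\mathsf A^k)_{ii}$. Since $\operatorname{diag}(\mathsf A) = 0$, the $k=1$ column of $M(\mathsf A)$ vanishes automatically, so $S$ is diagonally identifiable if and only if, for generic $\mathsf A \in \mathcal E(S)$, the $N-1$ vectors $\operatorname{diag}(I), \operatorname{diag}(\mathsf A^2), \ldots, \operatorname{diag}(\mathsf A^{N-1})$ are linearly independent in $\mathbb R^N$. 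This reformulation drives both halves of the theorem.

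For the necessary direction I would argue contrapositively. Suppose that for some $k \in \{3, \ldots, N-1\}$ every size-$k$ induced subgraph $G_S(V)$ is singular: then $\det(\mathsf A|_{V \times V}) \equiv 0$ for each $|V|=k$, so the $k$-th elementary symmetric polynomial $e_k(\mathsf A) = \sum_{|V|=k} \det(\mathsf A|_{V \times V})$ of the spectrum vanishes identically on $\mathcal E(S)$. Combined with $e_1(\mathsf A) = \tr(\mathsf A) = 0$ (zero diagonal), Cayley--Hamilton and Newton's identities produce a polynomial relation among $I, \mathsf A^2, \mathsf A^3, \ldots, \mathsf A^{N-1}$ whose scalar coefficients are polynomials in the remaining $e_j$'s. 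Extracting diagonals yields a non-trivial linear dependence among $\operatorname{diag}(I), \operatorname{diag}(\mathsf A^2), \ldots, \operatorname{diag}(\mathsf A^{N-1})$ whose coefficient vector is not of the forbidden form $(0,1,0,\ldots,0)$, thereby ruling out diagonal identifiability via the rank criterion.

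For the sufficient direction, by the density argument behind Proposition \ref{p:id} (non-identifiability is a closed algebraic condition in $\mathcal E(S)$) it is enough to exhibit a single witness $\mathsf A \in \mathcal E(S)$ satisfying the rank-$(N-1)$ condition. I would build such a witness inductively along the nested chain $V_3 \subset V_4 \subset \cdots \subset V_{N-1}$. The invertibility of $G_S(V_3)$ forces $V_3$ to contain a triangle, so a generic triangle-supported $\mathsf A$ makes $\operatorname{diag}(I|_{V_3})$ and $\operatorname{diag}(\mathsf A^2|_{V_3})$ independent on the $V_3$-coordinates. For the inductive step $V_k \mapsto V_{k+1} = V_k \cup \{v\}$, I would activate the edges of $G_S(V_{k+1})$ generically; invertibility of $G_S(V_{k+1})$ controls the leading monomial (in the newly introduced edge weights) of the closed-walk count $(\mathsf A^{k+1})_{vv}$, guaranteeing that $\operatorname{diag}(\mathsf A^{k+1})$ contributes a direction outside the span of the previously obtained diagonals, thereby bumping the rank by one. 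A short localization handles the single vertex outside $V_{N-1}$.

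The main obstacle is the inductive step of the sufficient direction: faithfully converting invertibility of $G_S(V_{k+1})$ into linear independence of the new diagonal vector. I expect this to rest on a Coates--Harary style expansion of $\det(\mathsf A|_{V_{k+1}})$ in terms of closed walks based at $v$, which exhibits a monomial in the newly activated edges that cannot be matched by earlier diagonal contributions. The parallel delicate step in the necessary direction\textemdash ensuring that the dependence produced via Newton's identities is not collinear with $(0,1,0,\ldots,0)$\textemdash follows by a simple degree argument on the coefficient of $\mathsf A^{k}$ in the extracted polynomial relation, but requires some care to write out cleanly.
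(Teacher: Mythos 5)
There are two genuine gaps here, one in each direction. The more serious one is in your sufficient direction. Your reduction of diagonal identifiability to the linear independence of $\operatorname{diag}(\id),\operatorname{diag}(\mathsf A^2),\ldots,\operatorname{diag}(\mathsf A^{N-1})$ is an equivalence only when the generic matrix of $\mathcal E(S)$ has simple spectrum, so that the symmetric commutant equals $\spann\{\id,\mathsf A,\ldots,\mathsf A^{N-1}\}$. You assert this genericity without proof, and it is false for general zero-diagonal supports: a star $K_{1,N-1}$ has eigenvalue $0$ with multiplicity $N-2$ for \emph{every} weighting. For the necessary direction this costs nothing (a polynomial in $\mathsf A$ commutes with $\mathsf A$ regardless of multiplicities), but for the sufficient direction your rank condition is then strictly weaker than diagonal identifiability, so even a successful ``rank-bumping'' induction would not close the argument without separately proving generic simplicity under the nested-invertibility hypothesis\textemdash and the induction step itself (invertibility of $G_S(V_{k+1})$ forcing $\operatorname{diag}(\mathsf A^{k+1})$ out of the span of the earlier diagonals) is exactly the part you flag as unresolved. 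The paper's route is entirely different and avoids all of this: Lemma~\ref{l:ident} shows that if $G_S(V')$ with $\lvert V'\rvert=N-1$ is both invertible and DI, then the block embedding of a witness $\mathsf M'$ (padded by a zero row and column) is DI, because invertibility of $\mathsf M'$ kills the extra column in any commuting zero-diagonal matrix; iterating down the nested chain reduces to the triangle on $V_3$, and Proposition~\ref{p:id} upgrades the single witness to a generic statement. Nestedness of the $V_k$ is used precisely and only here.

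In the necessary direction, the mechanism you propose does not produce the needed relation. Cayley--Hamilton with $e_1=e_k=0$ only yields $\sum_j\psi_j(\mathsf A)\mathsf A^j=0$, a relation involving $\mathsf A^N$ that collapses to a tautology once $\mathsf A^N$ is re-expanded; Newton's identities are scalar trace identities and give nothing at the matrix level. The ingredient actually required\textemdash and the one the paper imports from Eq.~(14) of \cite{espinasse2015relations}\textemdash is that a \emph{partial} Cayley--Hamilton sum $M_k(\mathsf A)=\sum_{j\le k}\psi_j(\mathsf A)\mathsf A^j$ (a Faddeev--LeVerrier auxiliary matrix) has $(i,i)$-entry equal to the sum of the size-$k$ principal minors avoiding vertex $i$. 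You correctly identify the right hypothesis (each $\det(\mathsf A|_{V\times V})\equiv 0$, not merely $e_k\equiv0$), but without this combinatorial identity there is no way to convert it into a zero-diagonal, non-collinear element of the commutant; the final step that $M_k(\mathsf A)=\lambda\mathsf A$ only on a negligible set is then the degree argument you anticipated.
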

The gap between the sufficient and necessary conditions lies essentially in the fact that the sequence $V_3,...,V_{N-1}$ need to be nested for the sufficient condition. 

\bigskip 
\begin{proof} We proceed by contradiction. For the necessary condition, let $k\geq 3$ be such that $G_S(V_k)$ is not invertible, for all subset $V_k \subset [1,N]$ of size $k$. For $\mathsf{A} \in \mathcal E(S)$, denote by $\psi_0(\mathsf{A}),\psi_1(\mathsf{A}),\ldots,\psi_N(\mathsf{A})$ the coefficients of the characteristic polynomial
$$ \det(z \id - \mathsf{A}) =  \sum_{j=0}^N \psi_j(\mathsf{A}) \ z^j \!, \quad z \in \mathbb R.  $$
Consider the matrix $M_k(\mathsf{A}) :=\sum_{j=0}^{k} \psi_j(\mathsf{A}) \ \mathsf A^j$. By Eq.\! (14) in \cite{espinasse2015relations}, we see that the $(i,i)$-entry of $M_k(\mathsf{A})$ equals the sum of all minors of size $k$ that do not contain the vertex $i$. Thus, the condition that $G_S(V_k)$ is not invertible for all subset $V_k$ of size $k$ implies that $M_k(\mathsf{A})$ has zero diagonal. On the other hand, the non-zero entries of $M_k(\mathsf A)$ are degree~$k$ polynomials in the variables $\mathsf A_{ij}, (i,j) \in \supp(\mathsf A)$. Therefore, the equality $M_k(\mathsf{A}) = \lambda \mathsf{A}$ for some $\lambda \in \mathbb R$ occurs for at most a countable number of $\mathsf{A} \in \mathcal E(S)$. Since $M_k(\mathsf{A})$ commutes with $\mathsf{A}$, we deduce that $S$ is not diagonally identifiable.\\

\noindent For the sufficient condition, we will need the following lemma.
\begin{lemma}\label{l:ident}
 If there exists a subset $V' \subset [1,N]$ of size $N-1$ such that $G_S(V')$ is both DI and invertible, then $G_S$ is DI.
\end{lemma}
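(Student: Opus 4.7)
The plan is to sidestep any direct perturbation argument and instead rely on Proposition~\ref{p:id}: since the set of $F_{\operatorname{diag}}$-identifiable matrices in $\mathcal E(S)$ is either empty or a dense open subset, it suffices to exhibit one $F_{\operatorname{diag}}$-identifiable matrix lying in $\mathcal E(S)$. All the work therefore reduces to producing a single well-chosen witness.

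To build that witness, I would relabel vertices so that $V'=[1,N-1]$. Because $G_S(V')$ is diagonally identifiable and invertible, Proposition~\ref{p:id} and Proposition~\ref{p:invert} each single out a dense open subset of $\mathcal E(S(V'))$, so their intersection is nonempty; pick $\mathsf{A}_0'\in\mathcal E(S(V'))$ which is simultaneously DI and invertible. Then set
\[
\mathsf{A}_0 := \begin{pmatrix}\mathsf{A}_0' & 0 \\ 0 & 0\end{pmatrix}\in\mathbb R^{N\times N}.
\]
This matrix is symmetric with zero diagonal, and $\supp(\mathsf{A}_0)=\supp(\mathsf{A}_0')\subseteq S(V')\subseteq S$, so $\mathsf{A}_0\in\mathcal E(S)$.

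The verification that $\mathsf{A}_0$ is DI is a short block calculation. Take any symmetric zero-diagonal $\mathsf{B}$ commuting with $\mathsf{A}_0$, written in block form $\mathsf{B}=\bigl(\begin{smallmatrix}\mathsf{B}' & b\\ b^\top & 0\end{smallmatrix}\bigr)$. Equating the blocks of $\mathsf{A}_0\mathsf{B}$ and $\mathsf{B}\mathsf{A}_0$ gives $[\mathsf{A}_0',\mathsf{B}']=0$ and $\mathsf{A}_0'\,b=0$. Invertibility of $\mathsf{A}_0'$ forces $b=0$, and the DI of $\mathsf{A}_0'$ applied to the zero-diagonal symmetric matrix $\mathsf{B}'$ (which commutes with $\mathsf{A}_0'$) yields $\mathsf{B}'=\lambda\mathsf{A}_0'$ for some $\lambda\in\mathbb R$. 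Hence $\mathsf{B}=\lambda\mathsf{A}_0$, so $\mathsf{A}_0$ is DI. Proposition~\ref{p:id} then promotes this single witness to a dense open subset of $\mathcal E(S)$, establishing that $G_S$ is DI.

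There is no serious obstacle. The only conceptual step is resisting the temptation to try to verify DI directly for a generic $\mathsf{A}\in\mathcal E(S)$: the block equations $[\mathsf{A}',\mathsf{B}']=ba^\top-ab^\top$ and $\mathsf{A}'b=\mathsf{B}'a$ couple $\mathsf{B}'$ and $b$ in a way that does not let the DI of $\mathsf{A}'$ apply immediately, and unwinding this coupling leads to a non-trivial analysis of the $O(a^2)$-correction terms between $(\mathsf{A}^k)_{ii}$ and $((\mathsf{A}')^k)_{ii}$. Working instead with the degenerate block-diagonal $\mathsf{A}_0$ makes the commutation equations decouple completely, and Proposition~\ref{p:id} does the rest.
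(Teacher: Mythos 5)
Your proof is correct and is essentially the paper's own argument: both relabel so that $V'=[1,N-1]$, take a single witness that is simultaneously invertible and DI on $V'$, pad it with a zero row and column, and run the same block computation ($\mathsf{A}_0'b=0$ forces $b=0$ by invertibility, then DI of $\mathsf{A}_0'$ gives $\mathsf{B}'=\lambda\mathsf{A}_0'$), before invoking Proposition~\ref{p:id} to promote the single witness to density. The only difference is cosmetic: you make the appeal to Proposition~\ref{p:id} explicit at both ends, whereas the paper leaves it implicit in the phrase ``it suffices to find one diagonally identifiable matrix.''
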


\begin{proof} We may assume that $V' = [1,N-1]$ without loss of generality. Let $\mathsf{M}'$ denote a symmetric $(N-1)\times (N-1)$ matrix indexed on $V'$ that is both invertible and diagonally identifiable, \textit{i.e.},~for all non-zero matrix  $\mathsf{A}' \neq \lambda \mathsf M'$,
$$ \mathsf{M}' \mathsf{A}' = \mathsf{A}' \mathsf{M}'  \ \Longrightarrow \ \operatorname{diag}(\mathsf{A}') \neq 0 . $$
To prove that $G_S$ is DI, it suffices to find a symmetric matrix $\mathsf M$ with support $S$ that is diagonally identifiable. Consider $\mathsf{M}$ defined by
\begin{align*}
 \mathsf{M} = \left[\begin{matrix}
     \mathsf{M}' & 0 \\ 0 & 0
    \end{matrix}\right] .
\end{align*}
Let $\mathsf{A}$ be a matrix with zero diagonal that commutes with $\mathsf M$ and write
\begin{align*}
\mathsf{A} = \left[\begin{matrix}  \mathsf{A}' & a \\ a^\top & 0
    \end{matrix}\right]
\end{align*}
for some $a \in \mathbb R^{N-1}$, with diag$(\mathsf{A}')=0$. The condition $\mathsf{M}\mathsf{A}=\mathsf{A}\mathsf{M}$ can be stated equivalently as
$$ \left\{ \begin{array}{l} \mathsf{M}'\mathsf{A}'=\mathsf{A}'\mathsf{M}' \\ \mathsf{M}' a = 0  \end{array} \right.  $$
Since $\mathsf{M}'$ is invertible by assumption, $a=0$ and the only matrix $\mathsf{A}$ with zero diagonal that commutes with $\mathsf{M}$ is the null matrix. Thus, $\mathsf{M}$ is diagonally identifiable.
\end{proof}

We now go back to prove the sufficient condition in Theorem \ref{t:iden}. Assume that $G_S$ is not diagonally identifiable, then by Lemma \ref{l:ident}, neither is $G_S(V_{N-1})$. By iterating the argument, we conclude that $G_S(V_3)$ is not diagonally identifiable. However, the only invertible graph on three vertices is the triangle graph, which is diagonally identifiable, leading to a contradiction. 
\end{proof}

\begin{remark}
The proof of Theorem~\ref{t:iden} {combines the results of} Lemma 2.1 in \cite{fytp14} and  Eq.\! (14) in \cite{espinasse2015relations}. The first one is of topological flavor proving that the set of identifiable matrices is either dense or empty in the set  of matrices with prescribed support. The paper \cite{fytp14} does not address condition on identifiability and Lemma 2.1 in \cite{fytp14} is not an identifiability result. The second ingredient is Eq.\! (14) in \cite{espinasse2015relations}. Actually, the paper  \cite{espinasse2015relations} contains a key combinatorial computation on the adjugate matrix of weighted graphs and, we must confess, it has been motivated by addressing a combinatorial calculus in the proof of identifiability. It gives part of the present proof (it proves that $M_k(\mathsf A)$ has zero diagonal in the proof of the necessary condition) but it is far from being its essence. The proof of the sufficient condition does not involve this calculus and proving the necessary part requires other simple but non trivial steps.
\end{remark}

\subsection{Proof of Proposition~\ref{prop:KiteGeneral}}
\label{sec:kites_suffice}
From Claim $(ii)$ in Theorem~\ref{t:iden} and considering the nested sequence $V_{N-1}\supset ... \supset V_3$ obtained by removing the last vertex on the tail of the kite at each step, we deduce a simple and tractable sufficient condition for a graph~$G_S$ to be diagonally identifiable, namely that~$G_S$ contains the kite graph as a vertex covering (possibly not induced) subgraph.

\subsection{Existence of kites}
\label{s:kite}
The condition on containing the kite graph $\nabla_N$ as a subgraph is mild in the sense that it is satisfied in the dense regime $\log n/n$ by random graphs, as depicted in the following proposition.
\begin{proposition}
\label{prop:kite}
The existence of kite graphs in the Erd\H{o}s-R\'enyi model occurs as follows. For any $\omega(N)\to\infty$ and for $G_N\sim G(N,p_N)$, if $p_N\geq(1/N)({\log N}+{\log\log N}+\omega(N))$ then $\mathbb P\{G_N\ \mathrm{has\ a\ kite\ of\ length\ }N\}$ tends to $1$ as $N$ goes to infinity.
\end{proposition}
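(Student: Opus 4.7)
The plan is to reduce the problem to the classical sharp Hamiltonicity threshold for $G(N,p)$, and then to exhibit, on top of a Hamiltonian cycle, a ``chord of length two'' via a two-round exposure that decouples this extra edge from the cycle. By monotonicity of the event ``contains a kite'', I may assume equality in the hypothesis. Write $p_N = 1 - (1-p_1)(1-p_2)$ with $p_1 := (\log N + \log\log N + \omega(N)/2)/N$ and $p_2 := (p_N - p_1)/(1-p_1) \sim \omega(N)/(2N)$, so that $G_N$ has the same law as $G_1 \cup G_2$ with $G_i \sim G(N,p_i)$ independent. Here $Np_1 - \log N - \log\log N \to \infty$ places $p_1$ above the Hamiltonicity threshold, while $Np_2 \to \infty$.

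By the Koml\'os-Szemer\'edi-Korshunov theorem, $G_1$ is Hamiltonian with probability $1-o(1)$; on this event, measurably select a Hamiltonian cycle $C = v_1 v_2 \cdots v_N v_1$ of $G_1$. For $N \geq 5$, the $N$ ``length-two chords'' $e_i := \{v_i, v_{i+2}\}$ (indices modulo $N$) are pairwise distinct and disjoint from the edge set of $C$. Since $G_2$ is independent of $G_1$ (hence of $C$), each $e_i$ belongs to $G_2$ independently with probability $p_2$, and therefore
\[
\mathbb{P}\bigl(\,e_i \notin G_2 \text{ for every } i\,\big|\, G_1 \text{ is Hamiltonian}\bigr) \leq (1-p_2)^N \leq e^{-Np_2} \xrightarrow[N\to\infty]{} 0.
\]

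Finally, if $e_i \in G_2 \subseteq G_N$ for some $i$, then deleting the single cycle edge $\{v_{i+2},v_{i+3}\}$ from $C$ produces a Hamiltonian path of $G_N$ of the form
\[
v_{i+3},\, v_{i+4},\, \ldots,\, v_{i-1},\, v_i,\, v_{i+1},\, v_{i+2},
\]
whose last three vertices $v_i, v_{i+1}, v_{i+2}$ form a triangle thanks to the two consecutive cycle edges together with the chord $e_i$. After re-indexing these $N$ vertices as $u_1, \ldots, u_N$ along the path, one recognizes exactly the edge set of $\nabla_N$, with $u_N u_{N-2} = v_{i+2} v_i = e_i$ playing the role of the extra kite edge. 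Combining the three ingredients yields $\mathbb{P}(G_N \supseteq \nabla_N) \to 1$.

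The main obstacle is essentially only notational: the heavy lifting is done by the sharp Hamiltonicity threshold of Koml\'os-Szemer\'edi and Korshunov, and the two-round exposure is the standard device to ensure that the chord revealed in $G_2$ is independent of the particular Hamiltonian cycle chosen in $G_1$; without this decoupling, the chord and the cycle are correlated through the revealed edges. A direct first-moment calculation shows that the expected number of spanning kites diverges as soon as $p_N$ exceeds the Hamiltonicity threshold, but a second-moment argument would run into the same clustering obstruction as for Hamiltonian paths, which is precisely why we borrow the Koml\'os-Szemer\'edi-Korshunov result as a black box rather than attempt a self-contained proof.
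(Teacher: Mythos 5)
Your proof is correct and follows essentially the same route as the paper's: a two-round exposure splitting $p_N$ into $p_1$ above the sharp Hamiltonicity threshold plus a residual $p_2\sim\omega(N)/(2N)$, Hamiltonicity of the first round as a black box, and an independent length-two chord from the second round to complete the kite. Your write-up is in fact slightly more explicit than the paper's (the measurable selection of the cycle, the bound $(1-p_2)^N\leq e^{-Np_2}$, and the explicit re-indexing into $\nabla_N$), but the argument is the same.
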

The proof makes use of the existence of a hamiltonian cycle which is a standard result in Random Graph Theory, see Corollary 8.12 in \cite{bollobas1998random} for instance. This results shows that in the regime $({\log N}+{\log\log N})/N$ an Erd\H{o}s-R\'enyi graph is diagonally identifiable. 

\medskip 

\begin{proof}
We now present the proof of this fact. Let $\omega(n)\to\infty$ and set
\begin{align*}
p_1&:=(1/n)({\log n}+{\log\log n}+{\omega(n)}/2),\\
p_2&:={\omega(n)}/(2n)\,.
\end{align*}
Let $G^{(1)}$ and $G^{(2)}$ be two independent Erd\H{o}s-R\'enyi graphs such that
\[
G_n^{(1)}\sim G(n,p_1)\quad
\independent
\quad G_n^{(2)}\sim G(n,p_2)\,.
\]
As shown in Corollary 8.12 in \cite{bollobas1998random} for instance, $\mathbb P\{G^{(1)}_n\ \mathrm{is\ hamiltonian}\}$ tends to $1$ as~$n$ goes to infinity. Given a hamiltonian cycle $C_n$ of length $n$ in $G^{(1)}$ one can construct a kite of length $n$ using edges of $G^{(2)}$ to connect a pair of vertices at distance $2$ on the cycle~$C_n$. Invoke the independence of $G^{(1)}$ and $G^{(2)}$ to get that this latter probability is 
\[
\mathbb P\{\{k,k+2\}\ \mathrm{is\ an\ edge\ of}\ G^{(2)}\ \mathrm{for\ some\ }k\}=\mathbb P\{B(n,p_2)>0\}\,,
\]
 where $B(n,p_2)$ denotes the binomial law. Using Poisson approximation one gets that this probability tends to $1$ as $n$ goes to infinity. We deduce that the probability that the graph $G=G_n^{(1)}+G_n^{(2)}$ has at least a kite tends to $1$. Observe that $G$ is an Erd\H{o}s-R\'enyi graph of size~$n$ and parameter $p=p_1+p_2-p_1p_2\leq p_n$ which concludes the proof.
\end{proof}

 \subsection{Proof of Theorem \ref{t:invertER}}
 \label{sec:invertER}
Combining Proposition \ref{prop:kite} and Theorem \ref{t:iden}, we deduce the first point. In view of the first point of Theorem \ref{t:iden}, we see that it is sufficient to find two isolated vertices to prove non-identifiability. Indeed, in this case, the kernel of the adjacency matrix has co-dimension at least $2$ showing that all sub-graphs of size $N-1$ are not invertible. Furthermore, one knows (see Theorem 3.1 in \cite{bollobas1998random} for instance) that the event ``there is at least two isolated points'' has sharp threshold function $\log n/n$. It proves the second point.

\section{Support reconstruction}
\subsection{Proof of Theorem \ref{t:main1}}
\label{proof:ThmL0}
Define $\mathcal S_1 := \{ S \in \mathcal S: \vert S \vert \leq \vert S^\star  \vert, S \neq S^\star  \} $ and $\mathcal S_2 := \{ S \in \mathcal S: \vert S \vert > \vert S^\star  \vert \} $, clearly it holds $\mathcal S = \{ S^\star  \} \cup \mathcal S_1 \cup \mathcal S_2$. We want to control the terms $\mathbb P\{\widehat S \in \mathcal S_1 \}$ and $\mathbb P\{\widehat S \in \mathcal S_2 \}$ separately and conclude in view of
\[
\mathbb P\{\widehat S \neq S^\star  \}= \mathbb P\{\widehat S \in \mathcal S_1 \} + \mathbb P\{\widehat S \in \mathcal S_2 \}\,.
\]
Since the Frobenius norm is sub-multiplicative, it holds, for all $\mathsf{A}\in\mathcal E(\overline F)$, 
\[
\Vert \mathsf{A} (\widehat{\mathsf{K}} - \mathsf{K}) - (\widehat{\mathsf{K}} - \mathsf{K}) \mathsf{A} \Vert \leq \Vert \mathsf{A} (\widehat{\mathsf{K}} - \mathsf{K}) \Vert_{2} + \Vert (\widehat{\mathsf{K}} - \mathsf{K}) \mathsf{A} \Vert \leq 2 \Vert \mathsf{A} \Vert \Vert \widehat{\mathsf{K}} - \mathsf{K} \Vert\,.
\]
Thus, the quantity $ \Vert \mathsf{A} \widehat{\mathsf{K}}- \widehat{\mathsf{K}}\mathsf{A} \Vert $ for $\mathsf{A} \in \mathcal E(\overline F)$ can be bounded from below and above by
\begin{equation}\label{cs} \Vert \mathsf{A}\mathsf{K}-\mathsf{K}\mathsf{A}  \Vert - 2\Vert \mathsf{A} \Vert \Vert \widehat{\mathsf{K}}- \mathsf{K}  \Vert \leq \Vert \mathsf{A} \widehat{\mathsf{K}}- \widehat{\mathsf{K}}\mathsf{A}  \Vert \leq \Vert \mathsf{A}\mathsf{K}-\mathsf{K}\mathsf{A}  \Vert + 2 \Vert \mathsf{A} \Vert \Vert \widehat{\mathsf{K}}- \mathsf{K}  \Vert. 
\end{equation}
To bound the term $\mathbb P\{\widehat S \in \mathcal S_1 \}$, we use \eqref{cs} to remark that for all $S \in \mathcal S_1$,
\[
 Q(S) = \min_{\mathsf{A} \in \mathcal E(S)\setminus\{0\}} \frac{\Vert \mathsf{A} \widehat{\mathsf{K}}- \widehat{\mathsf{K}}\mathsf{A}  \Vert}{\Vert \mathsf{A} \Vert} + \lambda_n \vert S \vert \geq  \min_{\mathsf{A} \in \mathcal E(S)\setminus\{0\}} \frac{\Vert \mathsf{A}\mathsf{K}-\mathsf{K}\mathsf{A}  \Vert}{\Vert \mathsf{A} \Vert} - 2 \Vert \widehat{\mathsf{K}}-\mathsf{K}  \Vert\,.
 \]
It follows
\begin{equation}\label{mins1} \min_{S \in \mathcal S_1} Q(S) \geq \min_{S \in \mathcal S_1} \min_{\mathsf{A} \in \mathcal E(S)\setminus\{0\}} \frac{\Vert \mathsf{A}\mathsf{K}-\mathsf{K}\mathsf{A}  \Vert}{\Vert \mathsf{A} \Vert} - 2 \Vert \widehat{\mathsf{K}}-\mathsf{K}  \Vert = c_0(S^\star) - 2 \Vert \widehat{\mathsf{K}}-\mathsf{K}  \Vert. \end{equation}
The constant $c_0(S^\star)$ is positive by $F$-identifiability of $\W$. Moreover, observe that
\begin{equation}
\label{min*} 
Q(S^\star ) = \min_{\mathsf{A} \in \mathcal E({S^\star })\setminus\{0\}} \frac{\Vert \mathsf{A} \widehat{\mathsf{K}}- \widehat{\mathsf{K}}\mathsf{A}  \Vert}{\Vert \mathsf{A} \Vert} + \lambda_n \vert S^\star  \vert \leq \frac{\Vert \W \widehat{\mathsf{K}}- \widehat{\mathsf{K}}\W  \Vert}{\Vert \mathsf{W} \Vert} + \lambda_n \vert S^\star  \vert \leq 2 \Vert \widehat{\mathsf{K}}- \mathsf{K} \Vert + \lambda_n \vert S^\star  \vert, \end{equation}
where we used both Eq.\! \eqref{cs} and the fact that $\W \mathsf{K} -  \mathsf{K} \W   =0$. Combining \eqref{mins1} and \eqref{min*}, we get
\[
 \mathbb P\{\widehat S \in \mathcal S_1 \} \leq \mathbb P \Big\{\min_{S \in \mathcal S_1} Q(S) \leq Q(S^\star ) \Big\} 
\leq \mathbb P \Big\{ \Vert \widehat{\mathsf{K}}-\mathsf{K} \Vert \geq \frac{c_0(S^\star) - \lambda_n \vert S^\star  \vert} 4 \Big\}\,.
\]
To control the term $\mathbb P(\widehat S \in \mathcal S_2 )$, we use that $\displaystyle \min_{S \in \mathcal S_2} Q(S) \geq \lambda_n \min_{S \in \mathcal S_2} \vert S \vert \geq \lambda_n (\vert S^\star  \vert +1)$. By Eq.\! \eqref{min*}, it follows
\begin{align*}
 \mathbb P\big\{\widehat S \in \mathcal S_2 \big\} &\leq \mathbb P \Big\{\min_{S \in \mathcal S_2} Q(S) \leq Q(S^\star ) \Big\}\\
 & \leq \mathbb P \Big\{ \lambda_n (\vert S^\star  \vert +1) \leq 2 \Vert \widehat{\mathsf{K}}- \mathsf{K} \Vert + \lambda_n \vert S^\star  \vert \Big\} \\
 & = \mathbb P \Big\{ \Vert \widehat{\mathsf{K}}- \mathsf{K} \Vert \geq \frac{\lambda_n}2 \Big\}\,.
\end{align*}
The proof of Theorem \ref{t:main1} follows directly by \eqref{a:conc}. The corollary is a direct consequence using Borel-Cantelli's Lemma.

\subsection{Proof of Theorem \ref{asymptbeta}}
Since $\Delta(\mathsf{K}) \Phi_S$ is of full rank, the value $\widehat \beta_S = \big( \Delta(\widehat{\mathsf{K}}) \Phi_S \big)^\dagger \Delta(\widehat{\mathsf{K}}) a_0$ is the unique solution to Eq.\! \eqref{hatbetaS} with probability tending to one asymptotically. Since the value of $\widehat \beta_S$ does not depend on $a_0 \in \mathcal A_S$, one can take $a_0 = w$ in view of $S^\star \subseteq S$. We obtain
$$ \widehat \beta_S = \big( \Delta(\widehat{\mathsf{K}}) \Phi_S \big)^\dagger \Delta(\widehat{\mathsf{K}}) w = - \big( \Delta(\widehat{\mathsf{K}}) \Phi_S \big)^\dagger \Delta(\mathsf{W}) \widehat k. $$
The result follows from Slutsky's lemma, using that $( \Delta(\widehat{\mathsf{K}}) \Phi_S)^\dagger$ converges in probability towards $( \Delta(K) \Phi_S)^\dagger$ and 
$$\sqrt n \, \big( \Delta(\W) \widehat k -  \Delta(\mathsf{W}) k \big) \xrightarrow[n \to \infty]{d} \mathcal N \big(0, \Delta(\mathsf{W}) \Sigma \Delta(\mathsf{W})^\top \big).$$

\bibliography{biblio}  

\begin{thebibliography}{30}
\providecommand{\natexlab}[1]{#1}
\providecommand{\url}[1]{\texttt{#1}}
\expandafter\ifx\csname urlstyle\endcsname\relax
  \providecommand{\doi}[1]{doi: #1}\else
  \providecommand{\doi}{doi: \begingroup \urlstyle{rm}\Url}\fi

\bibitem[Anderson and Kurtz(2011)]{anderson2011continuous}
David~F Anderson and Thomas~G Kurtz.
\newblock Continuous time markov chain models for chemical reaction networks.
\newblock In \emph{Design and Analysis of Biomolecular Circuits}, pages 3--42.
  Springer, 2011.

\bibitem[Barsotti et~al.(2014)Barsotti, De~Castro, Espinasse, and
  Rochet]{fytp14}
Flavia Barsotti, Yohann De~Castro, Thibault Espinasse, and Paul Rochet.
\newblock Estimating the transition matrix of a {M}arkov chain observed at
  random times.
\newblock \emph{Statistics \& Probability Letters}, 94:\penalty0 98--105, 2014.

\bibitem[Barsotti et~al.(2016)Barsotti, Philippe, and Rochet]{Barsotti2016}
Flavia Barsotti, Anne Philippe, and Paul Rochet.
\newblock Hypothesis testing for markovian models with random time
  observations.
\newblock \emph{Journal of Statistical Planning and Inference}, 173:\penalty0
  87--98, 2016.

\bibitem[Bento and Ibrahimi(2014)]{periera2014support}
Jos{\'e} Bento and Morteza Ibrahimi.
\newblock Support recovery for the drift coefficient of high-dimensional
  diffusions.
\newblock \emph{IEEE Transactions on Information Theory}, 60\penalty0
  (7):\penalty0 4026--4049, 2014.

\bibitem[Bento et~al.(2010)Bento, Ibrahimi, and Montanari]{pereira2010learning}
Jos{\'e} Bento, Morteza Ibrahimi, and Andrea Montanari.
\newblock Learning networks of stochastic differential equations.
\newblock In \emph{Advances in Neural Information Processing Systems}, pages
  172--180, 2010.

\bibitem[Bollob{\'a}s(1998)]{bollobas1998random}
B{\'e}la Bollob{\'a}s.
\newblock \emph{Random graphs}.
\newblock Springer, 1998.

\bibitem[Bubeck et~al.(2016)Bubeck, Ding, Eldan, and
  R{\'a}cz]{bubeck2016testing}
S{\'e}bastien Bubeck, Jian Ding, Ronen Eldan, and Mikl{\'o}s~Z R{\'a}cz.
\newblock Testing for high-dimensional geometry in random graphs.
\newblock \emph{Random Structures \& Algorithms}, 2016.

\bibitem[Espinasse and Rochet(2016)]{espinasse2015relations}
Thibault Espinasse and Paul Rochet.
\newblock Relations between connected and self-avoiding hikes in labelled
  complete digraphs.
\newblock \emph{Graphs and Combinatorics}, to appear, 2016.

\bibitem[Espinasse et~al.(2014)Espinasse, Gamboa, and
  Loubes]{espinasse2014parametric}
Thibault Espinasse, Fabrice Gamboa, and Jean-Michel Loubes.
\newblock Parametric estimation for gaussian fields indexed by graphs.
\newblock \emph{Probability Theory and Related Fields}, 159\penalty0
  (1-2):\penalty0 117--155, 2014.

\bibitem[Friedman et~al.(2008)Friedman, Hastie, and
  Tibshirani]{friedman2008sparse}
Jerome Friedman, Trevor Hastie, and Robert Tibshirani.
\newblock Sparse inverse covariance estimation with the graphical lasso.
\newblock \emph{Biostatistics}, 9\penalty0 (3):\penalty0 432--441, 2008.

\bibitem[Gaetan et~al.(2010)Gaetan, Guyon, and Bleakley]{gaetan2010spatial}
Carlo Gaetan, Xavier Guyon, and Kevin Bleakley.
\newblock \emph{Spatial statistics and modeling}, volume~81.
\newblock Springer, 2010.

\bibitem[Gaver~Jr(1959)]{gaver1959imbedded}
Donald~P Gaver~Jr.
\newblock Imbedded markov chain analysis of a waiting-line process in
  continuous time.
\newblock \emph{The Annals of Mathematical Statistics}, pages 698--720, 1959.

\bibitem[Giraud et~al.(2012)Giraud, Huet, and Verzelen]{giraud2012graph}
Christophe Giraud, Sylvie Huet, and Nicolas Verzelen.
\newblock Graph selection with ggmselect.
\newblock \emph{Statistical applications in genetics and molecular biology},
  11\penalty0 (3), 2012.

\bibitem[Girault(2015)]{girault2015stationary}
Benjamin Girault.
\newblock Stationary graph signals using an isometric graph translation.
\newblock In \emph{Signal Processing Conference (EUSIPCO), 2015 23rd European},
  pages 1516--1520. IEEE, 2015.

\bibitem[Gu{\'e}don and Vershynin(2015)]{guedon2015community}
Olivier Gu{\'e}don and Roman Vershynin.
\newblock Community detection in sparse networks via grothendieck's inequality.
\newblock \emph{Probability Theory and Related Fields}, pages 1--25, 2015.

\bibitem[Guyon(1995)]{guyon1995random}
Xavier Guyon.
\newblock \emph{Random fields on a network: modeling, statistics, and
  applications}.
\newblock Springer Science \& Business Media, 1995.

\bibitem[Hyv{\"a}rinen et~al.(2010)Hyv{\"a}rinen, Zhang, Shimizu, and
  Hoyer]{hyvarinen2010estimation}
Aapo Hyv{\"a}rinen, Kun Zhang, Shohei Shimizu, and Patrik~O Hoyer.
\newblock Estimation of a structural vector autoregression model using
  non-gaussianity.
\newblock \emph{The Journal of Machine Learning Research}, 11:\penalty0
  1709--1731, 2010.

\bibitem[Jiang et~al.(2013)Jiang, Wilson, Wang, Sha, Huang, Dai, and
  Zhao]{jiang2013understanding}
Jing Jiang, Christo Wilson, Xiao Wang, Wenpeng Sha, Peng Huang, Yafei Dai, and
  Ben~Y Zhao.
\newblock Understanding latent interactions in online social networks.
\newblock \emph{ACM Transactions on the Web (TWEB)}, 7\penalty0 (4):\penalty0
  18, 2013.

\bibitem[Klopp et~al.(2017)Klopp, Tsybakov, and Verzelen]{klopp2015oracle}
Olga Klopp, Alexandre~B Tsybakov, and Nicolas Verzelen.
\newblock Oracle inequalities for network models and sparse graphon estimation.
\newblock \emph{The Annals of Statistics}, 45\penalty0 (1):\penalty0 316--354,
  2017.

\bibitem[MacRae(1977)]{MR0468086}
Elizabeth~Chase MacRae.
\newblock Estimation of time-varying {M}arkov processes with aggregate data.
\newblock \emph{Econometrica}, 45\penalty0 (1):\penalty0 183--198, 1977.
\newblock ISSN 0012-9682.

\bibitem[Marques et~al.(2016)Marques, Segarra, Leus, and
  Ribeiro]{marques2016stationary}
Antonio~G Marques, Santiago Segarra, Geert Leus, and Alejandro Ribeiro.
\newblock Stationary graph processes and spectral estimation.
\newblock \emph{arXiv preprint arXiv:1603.04667}, 2016.

\bibitem[Matias et~al.(2015)Matias, Rebafka, and
  Villers]{matias2015semiparametric}
Catherine Matias, Tabea Rebafka, and Fanny Villers.
\newblock A semiparametric extension of the stochastic block model for
  longitudinal networks.
\newblock \emph{arXiv preprint arXiv:1512.07075}, 2015.

\bibitem[Miele and Matias(2017)]{miele2017revealing}
Vincent Miele and Catherine Matias.
\newblock Revealing the hidden structure of dynamic ecological networks.
\newblock \emph{arXiv preprint arXiv:1701.01355}, 2017.

\bibitem[Perraudin and Vandergheynst(2016)]{perraudin2016stationary}
Nathana{\"e}l Perraudin and Pierre Vandergheynst.
\newblock Stationary signal processing on graphs.
\newblock \emph{arXiv preprint arXiv:1601.02522}, 2016.

\bibitem[Pittenger(1982)]{MR668189}
Arthur~O. Pittenger.
\newblock Time changes of {M}arkov chains.
\newblock \emph{Stochastic Process. Appl.}, 13\penalty0 (2):\penalty0 189--199,
  1982.
\newblock ISSN 0304-4149.
\newblock \doi{10.1016/0304-4149(82)90034-5}.
\newblock URL \url{http://dx.doi.org/10.1016/0304-4149(82)90034-5}.

\bibitem[Rossi and Latouche(2013)]{rossi2013activity}
Fabrice Rossi and Pierre Latouche.
\newblock Activity date estimation in timestamped interaction networks.
\newblock In \emph{21-th European Symposium on Artificial Neural Networks,
  Computational Intelligence and Machine Learning}, pages 113--118, Apr. 2013.

\bibitem[Stewart and Sun(1990)]{stewart1990matrix}
Gilbert~W. Stewart and Ji-guang Sun.
\newblock \emph{Matrix Perturbation Theory}.
\newblock Computer Science and Scientific Computing. Academic Press Boston,
  1990.

\bibitem[Thomas et~al.(2015)Thomas, Verzelen, Barbillon, Coomes, Caillon,
  McKey, Elias, Garine, Raimond, Dounias, et~al.]{thomas2015chapter}
Mathieu Thomas, Nicolas Verzelen, Pierre Barbillon, Oliver~T Coomes, Sophie
  Caillon, Doyle McKey, Marianne Elias, Eric Garine, Christine Raimond, Edmond
  Dounias, et~al.
\newblock A network-based method to detect patterns of local crop biodiversity:
  Validation at the species and infra-species levels.
\newblock \emph{Advances in Ecological Research}, 53:\penalty0 259--320, 2015.

\bibitem[Verzelen(2008)]{verzelen2008Gaussian}
Nicolas Verzelen.
\newblock \emph{Gaussian graphical models and Model selection}.
\newblock PhD thesis, Universit{\'e} Paris Sud-Paris XI, 2008.

\bibitem[Verzelen et~al.(2015)Verzelen, Arias-Castro,
  et~al.]{verzelen2015community}
Nicolas Verzelen, Ery Arias-Castro, et~al.
\newblock Community detection in sparse random networks.
\newblock \emph{The Annals of Applied Probability}, 25\penalty0 (6):\penalty0
  3465--3510, 2015.

\end{thebibliography}
\end{document}